\definecolor{darkgreen}{rgb}{0.0, 0.5, 0.0}
\newcommand{\eps}{\varepsilon}
\newcommand{\ct}{\text{ct}}
\newcommand{\C}{\mathcal C}
\newcommand{\Ce}{C_{\eps_\theta}}
\newcommand{\E}{\mathbb E}
\renewcommand{\P}{\mathbb P}
\newcommand{\cP}{\mathcal P}
\newcommand{\U}{\mathcal{U}}
\newcommand{\1}{\mathds 1}
\newcommand{\cQ}{\mathcal{Q}}
\newcommand{\diam}{\mathrm{diam}}
\theoremstyle{definition}
\newtheorem{thm}{Theorem}
\newtheorem{defn}{Definition}%[section]
\newtheorem{prop}[defn]{Proposition}
\newtheorem{lem}[defn]{Lemma}
\newtheorem{conj}[defn]{Conjecture}
\tikzstyle{every node}=[circle, draw, fill=black!50, inner sep=0pt, minimum width=4pt]
\tikzstyle{white}=[circle, draw, fill=black!0, inner sep=0pt, minimum width=4pt]
\tikzstyle{bigwhite}=[circle, draw, fill=black!0, inner sep=0pt, minimum width=10pt]
\tikzstyle{dual}=[circle, draw=blue, fill=black!0, inner sep=0pt, minimum width=4pt]
\tikzstyle{fat}=[circle, draw, fill=red!50, inner sep=0pt, minimum width=8pt]
\tikzstyle{fat_bis}=[circle, draw, fill=blue!50, inner sep=0pt, minimum width=8pt]
\tikzstyle{fat_ter}=[circle, draw, fill=green!50, inner sep=0pt, minimum width=8pt]
\tikzstyle{rouge}=[circle, draw, fill=red, inner sep=0pt, minimum width=7pt]
\tikzstyle{bleu}=[circle, draw, fill=blue, inner sep=0pt, minimum width=7pt]
\tikzstyle{petitrouge}=[circle, draw, fill=red, inner sep=0pt, minimum width=4pt]
\tikzstyle{petitbleu}=[circle, draw, fill=blue, inner sep=0pt, minimum width=4pt]
\tikzstyle{texte}=[draw=none, fill=none]
\title{\bf{Planarity and non-separating cycles in uniform high genus quadrangulations}}
\author{Baptiste \bsc{Louf}\footnote{Uppsala universitet, \url{baptiste.louf@math.uu.se}}}
\begin{document}

\maketitle

\paragraph{Abstract.} We study large uniform random quadrangulations whose genus grow linearly with the number of faces, whose local convergence was recently established by Budzinski and the author \cite{BL19,BL20}.
Here we  study several properties of these objects which are not captured by the local topology. Namely we show that balls around the root are planar whp up to logarithmic radius, and we prove that there exists short non-contractible cycles with positive probability.
\section{Introduction}

\paragraph{Planar maps} Maps are surfaces formed by gluing polygons together. They have been given a lot of attention in the last decades, especially in the case of planar maps, i.e. maps of the sphere. They were first approached from the combinatorial point of view, starting with their exact enumeration by Tutte \cite{Tut63}, with generating function methods. Later on,  bijections between maps and decorated trees were discovered, starting with the Cori--Vauquelin--Schaeffer bijection \cite{Sch98these}.

More recently, thanks to both enumerative and bijective results, the properties of large random maps have been studied. More precisely, one can study the geometry of random maps picked uniformly in certain classes, as their size tends to infinity. In the case of planar maps, the most notable results are probably the identification of two types of "limits" (for two well defined topologies on the set of planar maps): the local limit (the \emph{UIPT}\footnote{in the case of triangulations, i.e. maps made out of triangles.} \cite{AS03} and \emph{UIPQ}\footnote{in the case of quadrangulations.} \cite{Kri05,CD06,Men08}) and the scaling limit (the \emph{Brownian map} \cite{LG11,Mie11}).

\paragraph{Maps on other surfaces}

Similar results exist for maps of genus greater than $0$. It is possible to study uniform maps with a fixed genus $g>0$. Enumerative (asymptotic) results have been obtained (see for instance \cite{BC86}), and there are bijections for maps on any surface (see for instance \cite{CMS09}). On the probabilistic side, equivalents of the Brownian map in genus $g>0$ have been constructed \cite{Bet16}.

It is also possible to study maps without constraints on the genus, see \cite{CP16,BCDH18,BCP19} for three different approaches to this problem.

\paragraph{High genus maps}

Very recently, yet another regime has been considered: maps whose genus grows linearly in the size of the map. They exhibit hyperbolic features, as their average degree (which is directly linked to the average curvature of the map) is asymptotically higher than in planar (or fixed genus) maps. Some of their geometric properties have been studied, starting with uniform unicellular maps, i.e. maps with only one face. Their local limit is a supercritical Galton-Watson tree \cite{ACCR13}, and their diameter is logarithmic \cite{Ray13a}. These two results rely on a bijection between unicellular maps and decorated trees \cite{CFF13}. 

The general case (i.e. uniform maps with various kinds of constraints on the face degrees) has been studied more recently, starting with uniform high genus triangulations, which converge locally in distribution towards a random hyperbolic triangulation of the plane \cite{BL19}. A larger family of maps (bipartite maps with prescribed face degrees) is studied in \cite{BL20}, and a similar behaviour is observed.

%In this paper, we tackle a more global observable, namely the \emph{systole}, i.e. the length of the smallest  non-contractible cycle. We show that it is (asymptotically almost surely) logarithmic in size.

In this paper, we investigate global geometric features of high genus maps. For technical reasons, we will study quadrangulations instead of triangulations. More precisely, for the rest of the paper, fix $0<\theta<\frac{1}{2}$, and let $(g_n)$ be a sequence such that $\frac{g_n}{n}\to \theta$. Let $q^{(n)}$ be a uniform bipartite quadrangulation of genus $g_n$ with $2n$ faces. 

\paragraph{Curves on a surface}
In this work, we will give a lot of attention to cycles seen as curves on a surface. On a surface of  genus at least $2$, there are three different types of (simple, closed) curves (see Figure~\ref{fig_type_curves}). The first kind is \emph{contractible} curves, i.e. curves that can be continuously deformed into a point. There are two types of non-contractible curves: the \emph{separating} curves and the \emph{non-separating} curves. More precisely, given a connected surface $\mathcal{S}$ and a non-contractible curve $\mathcal C$ on $\mathcal{S}$, then we say that $\mathcal C$ is separating if and only if $\mathcal{S}\setminus \mathcal C$ is disconnected.

\begin{figure}[!h]
\center
\includegraphics[scale=0.6]{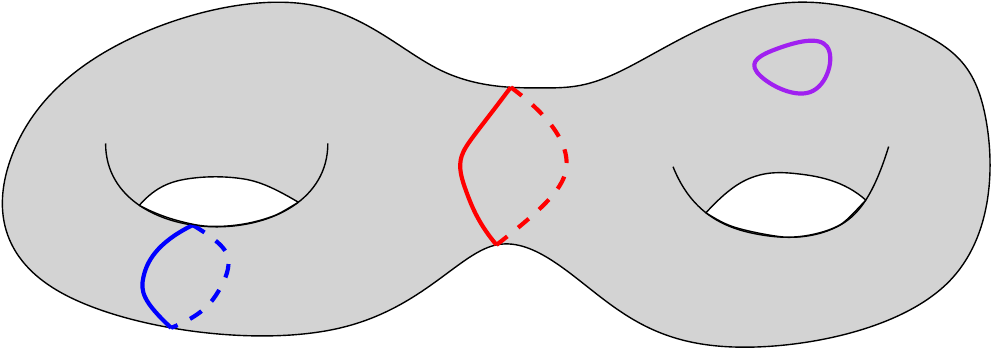}
\caption{The three types of curves on a surface: contractible (in purple), non-contractible and separating (in red) and non-separating (in blue).}\label{fig_type_curves}
\end{figure}

\paragraph{Our result}
In \cite{BL20}, it is proven that the local limit of $q^{(n)}$ is a random infinite quadrangulation of the plane. In particular, it implies that for every fixed $r$, the ball of radius $r$ around the root is planar with probability $1-o(1)$ as $n\to\infty$. This result might seem counter-intuitive at first, as $q^{(n)}$ is highly non-planar. Actually, here we extend this result to balls of a much larger radius. More precisely, we define the \emph{planarity radius} to be the largest $r$ such that the ball of radius $r$ around the root is planar \textbf{and} does not contain any non-contractible cycle. Actually, the second condition implies the first one, but they are not equivalent, see Figure~\ref{fig_cylinder}. 

\begin{figure}[!h]
\center
\includegraphics[scale=0.6]{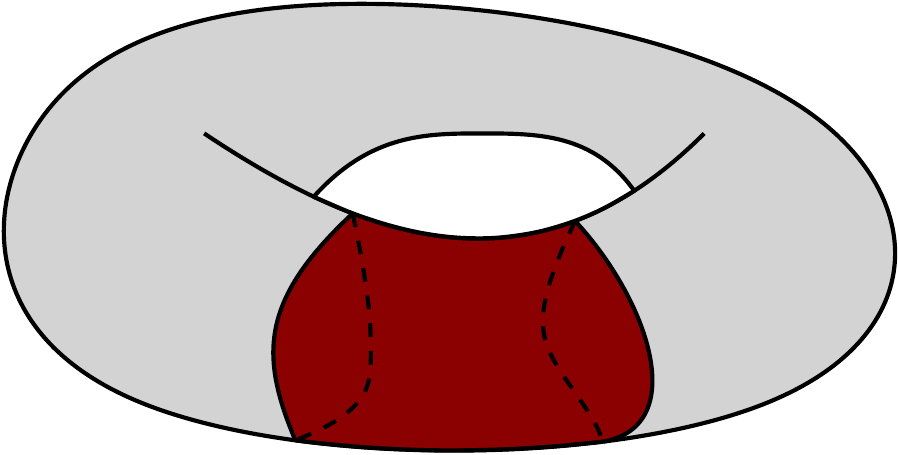}
\caption{The red part contains a non contractible curve of the grey surface, but it has the topology of a cylinder, hence it is planar.}\label{fig_cylinder}
\end{figure}
\vspace{1cm}
We show here that the planarity radius of $q^{(n)}$ is of logarithmic order whp\footnote{throughout the paper, we will write \emph{whp} instead of "with probability $1-o(1)$ as $n\to\infty$".}.

\begin{thm}\label{thm_planar_neighborhood}
There exists a constant $a_\theta$ such that
\[\text{PR}(q^{(n)})\geq a_\theta \log n\]
with probability $1-o(1)$ as $n\to \infty$, where $\text{PR}$ is the planarity radius.
\end{thm}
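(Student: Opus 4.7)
The strategy is to rule out short non-contractible closed walks based at the root of $q^{(n)}$, via a surgery-and-enumeration argument. First I would reduce the statement to such a control: if the ball $B_r$ contains a non-contractible cycle $C$ of the ambient surface, then concatenating a geodesic from the root to some vertex $v \in C$, the cycle $C$, and the reverse geodesic yields a non-contractible closed walk based at the root of length at most $2r + |C|$. Combining this with an a priori bound on the length of the shortest non-contractible cycle contained in $B_r$ (which can be controlled by the diameter of $B_r$, itself at most a constant times $r$), reduces the theorem to showing that for constants $K_\theta, A, \alpha > 0$ and for all $\ell \leq A r$,
\[
\P\bigl(\exists \text{ non-contractible closed walk of length } \ell \text{ based at the root of } q^{(n)}\bigr) \leq \frac{K_\theta^\ell}{n^\alpha}.
\]

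\textbf{Surgery and counting.} For each $\ell$, a quadrangulation $q$ admitting such a walk can, after deformation to a simple cycle (losing at most a polynomial factor in $\ell$), be cut along this cycle. In the non-separating case the result is a bordered bipartite quadrangulation of genus $g_n - 1$ with $2n$ internal faces and two boundary components of total length $2\ell$; in the separating case, a pair of bordered quadrangulations of smaller genera summing to $g_n$. Since the cutting procedure is at most $K^\ell$-to-one (accounting for the regluing pattern and the position of the root edge on the boundary), one obtains
\[
\P(\text{walk of length } \ell \text{ from root}) \leq K^\ell \left( \frac{\beta^\partial_{g_n - 1}(n; \ell, \ell)}{\beta_{g_n}(n)} + \text{separating terms}\right),
\]
where $\beta$ and $\beta^\partial$ denote the counts of unbordered and bordered bipartite quadrangulations with prescribed genus, face number, and boundary perimeters.

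\textbf{Enumeration and conclusion.} The decisive ingredient is a bound on these ratios of the form $C_\theta^\ell / n^\alpha$, extending the asymptotic enumeration of high-genus bipartite quadrangulations from \cite{BL19, BL20}. Heuristically, $C_\theta^\ell$ captures the exponential cost of prescribing a long boundary, while $n^{-\alpha}$ comes from polynomial corrections to the leading exponential growth $R(\theta)^n$ of the partition functions when the genus or topological type is perturbed. Summing over $\ell \leq A a_\theta \log n$ yields
\[
\P\bigl(\text{PR}(q^{(n)}) < a_\theta \log n\bigr) \leq \sum_{\ell \leq A a_\theta \log n} (K C_\theta)^\ell / n^\alpha = O\bigl(n^{A a_\theta \log(K C_\theta) - \alpha}\bigr),
\]
which is $o(1)$ provided $a_\theta < \alpha / (A \log(K C_\theta))$; this gives the claimed constant. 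The main technical obstacle is the enumeration step: producing quantitative bordered-to-unbordered ratio bounds in the high-genus regime with \emph{explicit} dependence on the boundary length $\ell$. The $\ell$-fixed version underlying the local convergence of \cite{BL19, BL20} does not suffice; one needs uniform control for $\ell$ up to $O(\log n)$, which should be within reach of the generating-function methods used there but requires careful bookkeeping. A secondary difficulty is obtaining uniform estimates on the separating-case terms, which involve a sum over all admissible splits $(n', g')$ with $n' + n'' = n$ and $g' + g'' = g_n$.
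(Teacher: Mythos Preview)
Your plan matches the paper's approach closely. The paper packages your ``closed walk based at the root, then deform to a simple cycle'' step into a single combinatorial object, the \emph{cycle with tail} (a simple non-contractible cycle joined to the root edge by a simple path), which makes the surgery an exact bijection (Lemma~\ref{lem_couper_cycle_tail}) rather than a $K^\ell$-to-one map and sidesteps the vagueness in your deformation. The reduction from planarity radius to smallest cycle-with-tail is then precisely the geometric argument you sketch: $|C|\le 6r$ for the shortest non-contractible cycle in $B_r$, combined with a tail of length at most $r+1$.

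For the enumeration step you correctly flag as the main obstacle, the paper does \emph{not} sharpen the generating-function asymptotics of \cite{BL19,BL20}. Instead it combines three discrete inequalities: the bounded ratio lemma $Q(n-1,g)\ge C_\eps\,Q(n,g)$ from \cite{BL20} (this supplies the factor $C_\theta^\ell$ uniformly in $\ell$, after filling the boundaries with quadrangles via Lemma~\ref{lem_ineq_boundaries}), the Carrell--Chapuy recursion~\eqref{eq_CC} (which controls both the genus-drop ratio $Q(n,g-1)/Q(n,g)$ and the separating sum), and a new combinatorial injection, Proposition~\ref{prop_injection}, giving $2g\,Q(n,g)\le (2n)^3 Q(n,g-1)$. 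This last ingredient is the one absent from your sketch, and it is exactly what disposes of your ``secondary difficulty'': it forces the small-piece terms in the separating sum to be negligible (Lemma~\ref{lem_n2_petit}). With these in hand the paper obtains $Q_{ct}(n,g_n,\ell)\le Q(n,g_n)/n^{1/7}$ for every $\ell\le c_\theta\log n$, and a union bound over $\ell$ concludes.
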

In terms of the dimensions of $q^{(n)}$, this is very big, as its diameter is also believed to be of logarithmic order (see Conjecture~\ref{conj_diameter}).

The neighbourhood of the root is planar, and there is no short non-contractible cycle passing near the root. However, if we look at the whole map, then there is a good chance to find very short non-contractible cycles.

\begin{thm}\label{thm_short_cycle}
There exists a constant $k_\theta>0$ such that
\[\P\left(\mbox{there exists a non-separating cycle of length $2$ in $q^{(n)}$}\right)\geq k_\theta +o(1)\]
as $n\to \infty$.
\end{thm}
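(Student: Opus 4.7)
The plan is to apply the second moment method to $X_n$, the number of non-separating cycles of length $2$ in $q^{(n)}$. First I would set up an enumerative surgery: cutting $q^{(n)}$ along such a cycle $C$ opens the surface into a bipartite quadrangulation of genus $g_n-1$ with $2n$ quadrangular faces and two marked boundaries of length $2$. Conversely, a genus $g_n-1$ quadrangulation with two marked length-$2$ boundaries can be re-glued in the non-separating way (there are only $O(1)$ choices of how to glue) to produce a quadrangulation of genus $g_n$ with $2n$ faces carrying a marked non-separating $2$-cycle. This yields, up to bounded combinatorial factors coming from rootings and gluings,
\[ \E[X_n] \;=\; c\,\frac{\beta_{g_n-1}^{(2,2)}(n)}{\beta_{g_n}(n)},\]
where $\beta_g(n)$ denotes the number of rooted bipartite quadrangulations of genus $g$ with $2n$ faces and $\beta_g^{(2,2)}(n)$ the analogous count with two marked length-$2$ boundary faces.

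Second, I would identify the limit $\lim_n \beta_{g_n-1}^{(2,2)}(n)/\beta_{g_n}(n)$. The enumerative tools developed in \cite{BL19,BL20} (a Goulden--Jackson-style recursion together with a careful analysis in the linear genus regime) yield sharp control on $\beta_g(n)$ when $g/n\to\theta$; adapting them to allow a bounded number of length-$2$ boundary faces should show that the above ratio converges to a positive constant $\ell_\theta$, so that $\E[X_n]\to \lambda_\theta:=c\,\ell_\theta>0$.

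Third, the same surgery applied to two disjoint non-separating $2$-cycles opens $q^{(n)}$ into a genus $g_n-2$ map with four length-$2$ boundaries, giving an analogous expression for $\E[X_n(X_n-1)]$ and, by the same enumerative input, $\E[X_n(X_n-1)]\to \lambda_\theta^{\,2}$. Degenerate pairs of cycles sharing an edge or a vertex are local configurations whose contribution is $O(1/n)$ and can safely be discarded. Paley--Zygmund then gives
\[\P(X_n\geq 1) \;\geq\; \frac{\E[X_n]^2}{\E[X_n^2]} \;\xrightarrow[n\to\infty]{}\; \frac{\lambda_\theta}{1+\lambda_\theta}>0,\]
so the theorem holds with any $k_\theta<\lambda_\theta/(1+\lambda_\theta)$.

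The main obstacle is the enumerative input for the ratio $\beta_{g_n-1}^{(2,2)}(n)/\beta_{g_n}(n)$ (and its higher-boundary analogue for the second moment): the asymptotics in \cite{BL19,BL20} are stated for closed maps, so this step requires extending them to maps with a bounded number of length-$2$ boundary faces while retaining enough precision to identify the limit of the ratio as a strictly positive constant, rather than merely bounding it from below. Careful bookkeeping of rootings and a treatment of the degenerate configurations in the surgery (cycles sharing the root edge, the two edges of $C$ being identified at their endpoints, etc.) are routine but need to be carried out.
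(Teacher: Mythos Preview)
Your overall architecture is the same as the paper's: second moment method applied to the number of non-separating $2$-cycles, with the first moment computed by cutting along the cycle (the paper contracts the resulting digons to marked edges, obtaining the exact identity $Q_{ns}^{(1)}(n,g)=n(2n-1)Q(n,g-1)$) and the second moment by cutting along two cycles.

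The substantive gap is your second step. You propose to show that $\E[X_n]$ actually \emph{converges} to some $\lambda_\theta>0$ by establishing that the ratio $Q(n,g_n-1)/Q(n,g_n)$ (times $n^2$) converges. But this is precisely Conjecture~\ref{conj_ratio} of the paper, stated there as open: the tools of \cite{BL19,BL20} give only the bounded ratio lemma (Lemma~\ref{lem_BRL}), not sharp asymptotics in the linear-genus regime. The paper therefore does \emph{not} identify any limit; instead it uses two one-sided bounds going in opposite directions. For the lower bound on $\E[X_n]$, it invokes Proposition~\ref{prop_injection}, a purely combinatorial injection (via trisections in well-labelled unicellular maps) giving $2g\,Q(n,g)\le (2n)^3 Q(n,g-1)$, hence $\E[X_n]\ge \theta/2+o(1)$. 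For the upper bound on $\E[X_n^2]$, it combines the Carrell--Chapuy recursion~\eqref{eq_CC} with Lemma~\ref{lem_BRL} to get $Q(n,g-1)\le C^{-2}n^{-2}Q(n,g)$. These two inequalities do not match, so the resulting $k_\theta$ is explicit but not sharp; the paper explicitly says it does not believe its bounds are optimal. Your plan to pin down $\lambda_\theta$ and get the Poisson-like relation $\E[X_n(X_n-1)]\to\lambda_\theta^2$ would be a strictly stronger result, but as written it rests on an unproven convergence.

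Two smaller issues. First, when you cut two non-separating $2$-cycles, the second cycle need not remain non-separating in the opened map of genus $g_n-1$; the paper has a separate case for when it becomes separating (yielding a pair of maps of total genus $g_n-1$, not a single map of genus $g_n-2$), and this case contributes at the same order as the main term in the paper's crude estimates. Second, your assertion that intersecting pairs contribute $O(1/n)$ is plausible but not justified by the available bounds; the paper devotes most of the proof of Lemma~\ref{lem_bij_nonsep} to controlling these degenerate configurations (shared vertex, shared edge, second cycle disconnected after cutting the first), and with the tools at hand they are only shown to be $O(1)$, not $o(1)$.
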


The proofs of these theorems rely on asymptotic estimations of the number of high genus maps. The three main ingredients are the Carrell--Chapuy formula \cite{CC15}, the bounded ratio lemma of  \cite{BL20}, and the following result:

\begin{prop}\label{prop_injection}
For all $n\geq 1,g\geq 1$, we have
\[2g Q(n,g)\leq (2n)^3Q(n,g-1)\]
where $Q(n,g)$ is the number of bipartite quadrangulations of genus $g$ with $n$ faces.
\end{prop}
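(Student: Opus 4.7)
My plan is to prove the inequality by constructing an explicit injection
\[
\Phi : \bigl\{ (q, h) : q \in \mathcal{Q}(n,g),\; h \text{ a ``handle'' of } q \bigr\} \;\hookrightarrow\; \mathcal{Q}(n, g-1) \times \{1, \dots, 2n\}^3,
\]
where $\mathcal{Q}(n, g)$ denotes the set of bipartite quadrangulations of genus $g$ with $n$ faces, and where each $q \in \mathcal{Q}(n,g)$ is canonically equipped with a set of at least $2g$ ``handles.'' Since a bipartite quadrangulation with $n$ faces has exactly $2n$ edges, counting the two sides of this injection immediately gives $2g\, Q(n,g) \le (2n)^3\, Q(n, g-1)$.

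To define the handles of $q$, I would use the root edge of $q$ to run a deterministic breadth-first search producing a spanning tree $T$ of $q$, and then, among the non-tree edges, a canonical spanning tree $T^\ast$ of the dual map $q^\ast$. By Euler's identity applied to $q$ (which has $V = n + 2 - 2g$ vertices, $2n$ edges and $n$ faces), the number of edges lying in neither $T$ nor $T^\ast$ equals $2n - (V - 1) - (n - 1) = 2g$; these are the handles. Any such edge is non-separating in $q$, and its dual edge is non-separating in $q^\ast$.

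For the surgery at a handle $h$, the idea is a local operation which removes $h$ together with a canonical compensating rewiring, producing a bipartite quadrangulation $q' \in \mathcal{Q}(n, g-1)$. Because both $q$ and $q'$ have $n$ quadrangular faces (and hence $2n$ edges), Euler's formula forces the vertex count to increase by exactly $2$: the surgery must ``open up'' a neighbourhood of $h$ into two new vertices while preserving the degree-$4$ condition on all faces and the bipartite $2$-colouring. The three auxiliary edges $(e_1, e_2, e_3) \in E(q')^3$ of the image record the location of the rewiring together with the data needed to reverse it (which half-edges are reattached, and how they are paired with respect to the bipartition), so that $(q, h)$ is recoverable from $(q', e_1, e_2, e_3)$.

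The main obstacle is the explicit design of this surgery. One has to produce a local move which always outputs a valid bipartite quadrangulation of genus exactly $g - 1$ with $n$ faces, and which is reversible from just three edges of the output. Naively deleting a single edge of a quadrangulation either merges two faces into a hexagon or creates digons (depending on whether the two sides of $h$ lie in distinct or identical faces), so the compensating rewiring must explicitly cancel these deviations from quadrangularity while simultaneously adding two vertices. A short case analysis on the combinatorial type of the handle (the multiset of faces it borders, and the cyclic pattern of its two half-edges in the corresponding boundary walks) should make this tractable; preservation of the bipartite colouring is then automatic provided that the rewiring respects the parity classes of the half-edges involved.
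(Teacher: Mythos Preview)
Your proposal has a genuine gap at the surgery step, and this gap is precisely where all the content of the proof lies. The tree/cotree decomposition correctly produces $2g$ canonical edges of $q$, but turning a single such edge into a genus-reducing operation that \emph{preserves the number of quadrangular faces and the bipartiteness} is not a ``short case analysis'': deleting or cutting one edge of a quadrangulation never simultaneously drops the genus by one, keeps all face degrees equal to $4$, and creates exactly two new vertices. You acknowledge this (``the main obstacle is the explicit design of this surgery''), but then leave it entirely undone; no concrete rewiring is described, and it is not clear that any local move with only three edge-labels of recovery data can achieve this. As written, the argument is a wish rather than a construction.

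The paper avoids this difficulty by not working on quadrangulations at all. It first passes, via the Chapuy--Marcus--Schaeffer bijection, from pointed quadrangulations of genus $g$ with $n$ faces to well-labelled unicellular maps of the same parameters (equation $2U^{\mathrm{lab}}(n,g)=(n+2-2g)Q(n,g)$). On unicellular maps, there \emph{is} a known canonical genus-reducing operation: Chapuy's trisections. A genus-$g$ unicellular map has exactly $2g$ trisections, and splitting the vertex at a trisection along three canonical corners yields a genus-$(g-1)$ unicellular map with three marked corners; since a map with $n$ edges has $2n$ corners, this gives $2g\,U^{\mathrm{lab}}(n,g)\le (2n)^3 U^{\mathrm{lab}}(n,g-1)$. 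Translating back through the bijection yields the proposition. In other words, the ``miracle surgery'' you are looking for already exists, but on the unicellular side, and the detour through well-labelled one-face maps is what makes it work.
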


This proposition is proven by a combinatorial injective operation. %It can be interpreted as a second "bounded ratio lemma", in the sense that it bounds a certain growth ratio, this time when the genus increases.

\paragraph{Structure of the paper} We start with some definitions, then we discuss some natural developments of this work.
In the third section we prove Proposition~\ref{prop_injection}, and the last two sections are devoted to the proofs of Theorems~\ref{thm_planar_neighborhood} and~\ref{thm_short_cycle}. 

\paragraph{Acknowledgements} The author is grateful to Thomas Budzinski and Guillaume Chapuy for useful comments about this work, to Bram Petri for explanations about hyperbolic geometry, and to the anonymous referee whose comments allowed to greatly improve the presentation of this paper.

\section{Definitions}\label{sec_def}

A \emph{map} $M$ is the data of the gluing of a finite collection of oriented polygons (the \emph{faces}) to form a compact, connected and oriented surface. The vertices and sides of the polygons become, after the gluing, the \emph{vertices} and \emph{edges} of $M$. The genus $g$ of the surface formed by the gluing of the polygons is also called the genus of $M$. A bipartite map is a map whose vertices are either black or white, and whose edges always connect a black and a white vertex. We will consider \emph{rooted} bipartite maps, i.e. bipartite maps with a distinguished edge, called the \emph{root}. The white vertex incident to the root is called the \emph{root vertex}. We can give a canonical orientation to all edges (from white to black for instance), and therefore it makes sense to talk about what is on the left or on the right of an edge.

A \emph{map with holes} is a bipartite map with a certain number of marked faces called holes. For any map $m$, we define the \emph{ball of radius $r$} around the root of $m$ (noted $B_r(m)$) as the map with holes formed by all vertices of $m$ at distance $r$ or less to the root vertex, all edges of $m$ with both endpoints in $B_r(m)$, and every face of $m$ that has all its incident edges in $B_r(m)$. 

A (bipartite) \emph{quadrangulation}\footnote{from now on, all quadrangulations will be bipartite.} $q$ is a bipartite map whose faces are quadrangles. If $q$ has $n$ faces and genus $g$, it has $2n$ edges and $n+2-2g$ vertices. We denote by $\cQ(n,g)$ the set of triangulations with $n$ faces and genus $g$, and by $Q(n,g)$ its cardinality. A quadrangulation with a \emph{boundary} of size $2p$ is a map with quadrangular faces except for one special face, called the boundary, that is a simple\footnote{by simple, we mean that in the construction of the map by gluing polygons, no two sides of the $2p$-gon are glued together.} $2p$-gon, such that the boundary sits on the right of the root. We denote by $\cQ^{(p)}(n,g)$ the set of bipartite quadrangulation  with $n$ quadrangles, a boundary of size $p$ and genus $g$, and by $Q^{(p)}(n,g)$ it cardinal. Quadrangulations with two boundaries are defined the same way.
A quadrangulation with two boundaries is a map \textbf{with two roots} with quadrangular faces except for two special faces, called the boundaries, that are simple and vertex-disjoint, such that each boundary sits on the right of one root. We require the boundaries to be distinguishable, i.e. there is a first boundary and a second boundary.
We denote by $\cQ^{(p,p')}(n,g)$ the set of bipartite quadrangulations  with $n$ quadrangles, boundaries of size $2p$ and $2p'$, and genus $g$, and by $Q^{(p,p')}(n,g)$ its cardinality.

A \emph{unicellular map} is a map with only one face, with a distinguished oriented edge called the root. Let $\U(n,g)$ be the set of unicellular maps of genus $g$ with $n$ edges.

A \emph{simple path} of a map $M$ is a list of vertices $(v_0,v_1,…,v_\ell)$ and edges $(e_1,e_2,…,e_\ell)$, such that for all $1\leq 1\leq\ell$, $e_i$ joins $v_{i-1}$ and $v_i$, with the condition that the $v_i$'s are all distinct. The size of a simple path $P$, noted $|P|$, is the number of edges it contains.
A  \emph{cycle} of a map $M$ is a list of vertices $(v_0,v_1,…,v_\ell)$ and edges $(e_1,e_2,…,e_\ell)$, such that for all $1\leq 1\leq\ell$, $e_i$ joins $v_{i-1}$ and $v_i$, and such that $v_0=v_\ell$. The size of a cycle $C$, noted $|C|$, is the number of edges it contains. In what follows, we will only consider \emph{simple} cycles, that satisfy the extra condition $v_i\neq v_j$ for all $1\leq i<j\leq \ell$. A \emph{contractible cycle} is a cycle that, seen as a curve on the surface, is contractible.  %This is because any cycle can bee seen as the concatenation of a certain number of simple cycles.  
A  contractible simple cycle separates the map $M$ in two parts, one of them being a planar map with a boundary. On the other hand, a non-contractible cycle either separates the map in two non-planar parts, in which case it is called \emph{separating}, or does not separate the map, in which case it is called \emph{non-separating}. Note that a non-separating cycle is necessarily non contractible. Recall that Figure~\ref{fig_type_curves} presents the three types of curves on a surface.

%The \emph{systole} of a map $M$ is the size of the smallest non-contractible cycle of $M$.

% Let $\U(n,g)$ be the set of unicellular maps of genus $g$ with $n$ edges, and 

\section{Discussion and conjectures}

Before going to the proofs of the main results, we want to compare our model with pre-existing models of hyperbolic geometry, and present a few problems that would be a natural extension of this work.

\paragraph{Comparison with hyperbolic geometry}

High genus maps can be seen as discrete models of two dimensional hyperbolic geometry. In the continuous setting, several models of random hyperbolic metrics on surfaces as the genus goes to infinity have been well studied in the past, two famous examples being the Brooks--Makover model \cite{BM04} and the Weil--Petersson measure \cite{GPY11,Mir13}. The results obtained so far about random uniform high genus maps are equivalent to the results obtained on these continuous models, and we conjecture that high genus maps will behave similarly as continuous models when we look at other geometric observables (see the conjectures below).

In particular, concerning the present work, in \cite{Mir13}, it is proved that the injectivity radius around a given point in a surface of genus $g$ with a hyperbolic metric under the Weil--Petersson measure grows logarithmically in $g$ as $g\to\infty$, and this implies the same growth rate for the planarity radius of such surfaces. The injectivity radius is defined as the smallest $r$ such that the ball of radius $r$ around a given point is not homeomorphic to a disk. Unfortunately, such a result about the injectivity radius would not transfer to maps, as they are not "smooth" enough.

\paragraph{Other models of maps}
We are quite confident that the proofs in the present article adapt to many other models of maps (at least triangulations and bipartite maps with prescribed bounded face degrees). The proof of Proposition~\ref{prop_injection} would involve objects called \emph{mobiles} (see \cite{BDG04}) in lieu of unicellular well-labeled maps, a version of the bounded ratio lemma holds for many models (see \cite{BL19,BL20}), and the Carrell--Chapuy formula for bipartite quadrangulations can be replaced by other similar formulas \cite{GJ08,Louf19b}.

\paragraph{The diameter of high genus maps}
In Theorem~\ref{thm_planar_neighborhood}, we only give a lower bound for the radius of the biggest planar ball around the root. We believe that the upper bound is also of logarithmic order, and this would be implied by the following conjecture\footnote{ we want to stress on the fact that this conjecture is not ours, instead it is attributable to several people in the community.}:
\begin{conj}\label{conj_diameter}
There exist constants $m_\theta$ and $M_\theta$ such that
\[m_\theta\log(n)\leq \diam(q^{(n)})\leq M_\theta\log(n)\]
whp.
\end{conj}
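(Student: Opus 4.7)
The conjecture has two parts, of quite different difficulty. For the lower bound, my plan is to combine the local convergence of $q^{(n)}$ to the hyperbolic quadrangulation of the plane constructed in \cite{BL20} with uniform control on ball sizes. The local limit is known to have exponential volume growth, so there exists $C = C(\theta) > 1$ with $\E[|B_r|] \le C^r$ in the local limit. Using the bounded ratio lemma together with Theorem~\ref{thm_planar_neighborhood}, which guarantees that $B_r(q^{(n)})$ is planar (hence identifiable with a ball in a planar object) for $r \le a_\theta \log n$, one should obtain $\E[|B_r(q^{(n)})|] \le C^r$ uniformly for $r \le a_\theta \log n$ and $n$ large. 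Taking $m_\theta < 1/\log C$ and applying Markov's inequality gives $|B_{m_\theta \log n}(q^{(n)})| = o(n)$ whp; since $q^{(n)}$ has $(1-2\theta + o(1))\,n$ vertices, some vertex must lie outside this ball, so $\diam(q^{(n)}) \ge m_\theta \log n$ whp.

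The upper bound is substantially harder, and my plan is necessarily speculative. The natural strategy is to establish an isoperimetric inequality of hyperbolic type for $q^{(n)}$: there should exist $c = c(\theta) > 0$ such that whp, every subset $S \subset V(q^{(n)})$ with $|S| \le n/2$ has vertex-boundary of size at least $c|S|$. Such a bound would immediately force $\diam(q^{(n)}) \le M_\theta \log n$, since a growing ball would then double in size in a bounded number of steps until it covered half the vertices. To attack the isoperimetric inequality, I would attempt a first-moment estimate: enumerate quadrangulations admitting a small cut using the Carrell--Chapuy recursion and the surgery of Proposition~\ref{prop_injection}, and show that the total count is negligible compared to $Q(n, g_n)$.

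The main obstacle is clearly the upper bound. The first-moment route runs into a combinatorial explosion of admissible cut shapes, and the surgeries available (including Proposition~\ref{prop_injection}) control global enumeration rather than the fine structure of small cuts. A detour via the local limit, which is expected to have positive Cheeger constant, and a transfer of isoperimetry to $q^{(n)}$ would be appealing but requires quantitative rates of local convergence that are not currently available. For these reasons, the upper bound appears to require genuinely new techniques beyond those developed in this paper, consistent with the fact that even in the continuous Weil--Petersson model, diameter estimates of logarithmic order are highly nontrivial results.
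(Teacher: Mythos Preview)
The statement is a \emph{conjecture}; the paper does not prove it. The upper bound is left entirely open. For the lower bound, the paper simply remarks that it is an immediate corollary of Theorem~\ref{thm_planar_neighborhood}: since $g_n\ge 1$ for large $n$, the map $q^{(n)}$ is not planar, whereas $B_{\lfloor a_\theta\log n\rfloor}(q^{(n)})$ is planar whp by Theorem~\ref{thm_planar_neighborhood}. Hence the ball cannot equal the whole map, so there is a vertex at distance greater than $a_\theta\log n$ from the root, and $\diam(q^{(n)})\ge a_\theta\log n$ whp. This gives $m_\theta=a_\theta$ with no further work.

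Your lower-bound plan takes a much longer route and contains a genuine gap. Local convergence only controls the law of $B_r(q^{(n)})$ for \emph{fixed} $r$; it does not by itself give $\E[|B_r(q^{(n)})|]\le C^r$ uniformly for $r$ growing like $\log n$. You invoke the bounded ratio lemma and Theorem~\ref{thm_planar_neighborhood} to bridge this, but neither statement produces a uniform volume bound: the bounded ratio lemma controls ratios $Q(n-1,g)/Q(n,g)$, not ball sizes, and Theorem~\ref{thm_planar_neighborhood} only tells you the ball is planar, which does not bound its volume. So the step ``one should obtain $\E[|B_r(q^{(n)})|]\le C^r$ uniformly'' is precisely where the work would be, and it is unnecessary given the one-line topological argument above.

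For the upper bound, you correctly identify it as the substantial open part, and your isoperimetric/Cheeger outline is a reasonable heuristic. The paper offers no proof and no strategy here, so there is nothing to compare; your candid assessment that new techniques are needed matches the paper's stance.
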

The lower bound is an immediate corollary of Theorem~\ref{thm_planar_neighborhood}, but we must mention that there exists a simpler proof of the lower bound (G. Chapuy, private communication).

\paragraph{Convergence of the ratio}
If we take $\theta\in(0,1/2)$, Proposition~\ref{prop_injection} along with the Carrell--Chapuy formula \eqref{eq_CC} imply that
\[\frac{Q(n,g_n)}{n^2Q(n,g_n-1)}=\Theta(1)\] as $n\to \infty$. We believe that this result can be made more precise.
\begin{conj}\label{conj_ratio}
There exists a function $r(\theta)$ such that 
\[\frac{Q(n,g_n)}{n^2Q(n,g_n-1)}\to r(\theta)\] as $n\to \infty$.
\end{conj}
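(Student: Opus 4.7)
The plan is to derive a fixed-point equation for $r(\theta)$ by applying the Carrell--Chapuy recurrence \eqref{eq_CC} simultaneously to $Q(n,g_n)$ and to $Q(n,g_n-1)$ and dividing. Writing schematically
\[(n+1)\,Q(n,g)\;=\;A_n\,Q(n-2,g-1)\;+\;\sum_{\substack{k+k'=n-2\\ h+h'=g}}B_{k,k'}\,Q(k,h)\,Q(k',h'),\]
and setting $\rho_n:=Q(n,g_n)/\bigl(n^2Q(n,g_n-1)\bigr)$, the quotient of the two instances of the recurrence expresses $\rho_n$ as an explicit combination of (i) ratios of the form $Q(n-2,g_n-1)/\bigl((n-2)^2Q(n-2,g_n-2)\bigr)$, which are $\rho_{n-2}$ up to a negligible shift, and (ii) a normalized convolution term $\mathcal{C}_n$ built from products $Q(k,h)Q(k',h')$. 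The goal is to show that $\mathcal{C}_n$ converges to an explicit functional of $\theta$ and $r(\theta)$, so that any subsequential limit $r^*$ of $(\rho_n)$ satisfies a scalar equation whose unique positive solution is $r(\theta)$.

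The first step is to combine Proposition~\ref{prop_injection} with \eqref{eq_CC} to show that $(\rho_n)$ is bounded above and below by strictly positive constants depending only on $\theta$. The upper bound is immediate from the proposition; for the lower bound, one isolates the $Q(n-2,g_n-1)$ contribution in the recurrence applied to $Q(n,g_n-1)$ and uses the bounded ratio lemma of \cite{BL20} to control all other terms. The second step is to use the bounded ratio lemma more systematically to transfer the uniform boundedness of $\rho_n$ to nearby ratios: whenever $(m,h)$ satisfies $m/n\to 1$ and $(g_n-h)/n\to 0$, one has $Q(m,h)/Q(n,g_n)=\Theta(1)$ with a controlled, asymptotically continuous dependence on the shift parameters.

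The third step is the analysis of the convolution. Using the bounded ratio lemma and a concentration (or large-deviation) estimate for $(k,h)\mapsto Q(k,h)$ along the line $h\approx\theta k$, one must show that the dominant contribution to $\mathcal{C}_n$ comes from pairs $(k,h),(k',h')$ whose genus-to-size ratios are both close to $\theta$ and whose sizes are concentrated around a single split $k/n\to\alpha$. The limit of $\mathcal{C}_n$ is then an integral expression $\mathcal{I}(\theta,r(\theta))$ in which $r(\theta)$ enters through the ratios of neighbouring $Q$-values. The fourth and final step is to pass to a subsequential limit $r^*$ of $(\rho_n)$, obtaining a scalar equation $r^*=F_\theta(r^*)$; one then shows by a contraction or monotonicity argument that $F_\theta$ admits a unique positive fixed point in the range provided by Step~1, so that the full sequence $(\rho_n)$ converges to $r(\theta):=F_\theta^{-1}(\mathrm{id})$.

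The main obstacle is Step~3: establishing the requisite concentration of the convolution sum in the high-genus regime. For fixed-genus quadrangulations the analogous statement is standard via generating functions and singularity analysis, but here no such analytic handle is available, and the concentration must be extracted directly from the bounded ratio lemma together with \eqref{eq_CC}, presumably via a bootstrap on a two-variable profile function. A secondary difficulty lies in Step~4, since the limiting map $F_\theta$ is given only implicitly by an integral; uniqueness of its fixed point may require a monotonicity argument rather than a direct contraction estimate. These are precisely the ingredients that are missing from the present paper and that prevent Conjecture~\ref{conj_ratio} from being a theorem.
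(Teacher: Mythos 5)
This statement is labelled a \emph{conjecture} in the paper, and the paper offers no proof of it: the text immediately preceding it establishes only the two-sided bound $Q(n,g_n)/(n^2 Q(n,g_n-1))=\Theta(1)$ via Proposition~\ref{prop_injection} and Lemma~\ref{lem_ineq_genus}, and then explicitly states that convergence is merely a belief. So there is no paper argument to compare your proposal against; what can be assessed is whether your outline is a plausible route and where it genuinely breaks down.

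Your Step~1 is sound and is essentially what the paper already does. Your Step~2 (uniformity under small shifts of $(n,g)$) is also plausible given the bounded ratio lemma and Lemma~\ref{lem_ineq_genus}. But beyond that two concrete problems arise. First, your schematic form of the Carrell--Chapuy recurrence silently drops the term $4(2n-1)Q(n-1,g)$, which is the dominant term, changes the size but \emph{not} the genus, and therefore does not relate $\rho_n$ to $\rho_{n-2}$ at all; once it is restored, dividing the two instances of \eqref{eq_CC} produces a linear relation entangling $\rho_n$, $\rho_{n-1}$ (via same-genus ratios $Q(n-1,g_n)/Q(n-1,g_n-1)$), and $\rho_{n-2}$, and controlling all three simultaneously requires exactly the kind of Cauchy-type estimate on $(\rho_n)$ that one is trying to prove. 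Second, and as you yourself flag, the convolution term is the heart of the matter: you need concentration of the double sum on splits $(k,h)$ with $h/k\approx\theta$ and a deterministic limiting profile for the fraction $k/n$, and there is currently no tool in the paper (and no analytic handle such as singularity analysis) that yields this in the high-genus regime. Even granting the concentration, Step~4 is not automatic: you obtain at best a relation that every subsequential limit of $(\rho_n)$ must satisfy, and without an explicit form of $F_\theta$ it is not clear that the relation pins down a unique value rather than an interval or a family of solutions. In short, you have correctly identified the shape of a possible argument and honestly located the missing ingredient, but the omitted $Q(n-1,g)$ term and the unproven concentration make this an outline rather than a proof; this is consistent with the fact that the paper presents the statement as open.
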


\paragraph{Short non contractible cycles} The \emph{systole} of a map is the size of its shortest non-contractible cycle. Theorem~\ref{thm_short_cycle} proves that $\text{syst}(q^{(n)})=2$ with positive probability as $n\to\infty$. This leads us to think that the systole is asymptotically almost surely finite, and this would be coherent with results on continuous models \cite{Pet17}.
\begin{conj} \label{conj_syst} We have
%There exists a function $f$ with $f(x)\to 0$ as $x\to \infty$ such that
%\[\limsup_{n\to\infty}\P(\text{syst}(q^{(n)})>M)\leq f(M)\]

\[\lim_{M\to \infty}\limsup_{n\to\infty}\P(\text{syst}(q^{(n)})>M)=0.\]

\end{conj}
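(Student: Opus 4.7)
The plan is to extend the mechanism behind Theorem~\ref{thm_short_cycle} from length-$2$ non-separating cycles to non-separating cycles of every fixed even length $2\ell$, and then combine the resulting positive-probability estimates across lengths via a Poisson-type limit for short cycle counts.

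Step 1 (Mean of $N_\ell$). For each fixed $\ell\geq 1$, generalise the surgery behind Proposition~\ref{prop_injection}: cutting a quadrangulation $q\in\cQ(n,g_n)$ along a non-separating simple cycle $C$ of length $2\ell$ produces a bipartite map of genus $g_n-1$ with two simple boundaries of size $2\ell$, i.e.\ an element of $\cQ^{(\ell,\ell)}(n,g_n-1)$; conversely, regluing the two boundaries of any such element via a matching cyclic shift reconstructs a pair $(q,C)$. Tracking the cyclic shift and the root-edge choice on $C$ gives
$$\#\{(q,C):q\in\cQ(n,g_n),\ C\text{ non-sep. of length }2\ell\}\asymp \ell\cdot Q^{(\ell,\ell)}(n,g_n-1).$$
Combining this with the bounded ratio lemma of \cite{BL20} (which compares $Q^{(\ell,\ell)}(\cdot,g)$ to $Q(\cdot,g)$) and with Proposition~\ref{prop_injection} (which compares $Q(n,g-1)$ to $Q(n,g)$) yields a lower bound
$$\E\!\left[N_\ell(q^{(n)})\right]\geq \alpha_\theta^{(\ell)}>0,$$
where $N_\ell$ counts non-separating cycles of length $2\ell$. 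This is exactly the estimate that underlies Theorem~\ref{thm_short_cycle} in the case $\ell=1$.

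Step 2 (Joint factorial moments). To pass from a positive mean to a statement about the systole, compute the joint factorial moments of $(N_1,\dots,N_M)$. A family of $k$ pairwise disjoint non-separating cycles of lengths $2\ell_1,\dots,2\ell_k$ whose union is still non-separating can be cut simultaneously to produce an element of $\cQ^{(\ell_1,\ell_1,\dots,\ell_k,\ell_k)}(n,g_n-k)$; iterating Step 1 then shows that the moments factorise asymptotically, giving a Poisson limit
$$\bigl(N_1,\dots,N_M\bigr)\ \xrightarrow[n\to\infty]{(d)}\ \bigl(P_1,\dots,P_M\bigr),$$
with the $P_\ell$ independent Poisson of positive rates $\lambda_\theta^{(\ell)}$. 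Using that $\{\text{syst}(q^{(n)})>2M\}=\{N_1=\cdots=N_M=0\}$,
$$\limsup_{n\to\infty}\P\!\left(\text{syst}(q^{(n)})>2M\right)\ \leq\ \prod_{\ell=1}^M\P(P_\ell=0)\ =\ \exp\!\left(-\sum_{\ell=1}^M\lambda_\theta^{(\ell)}\right),$$
which tends to $0$ as $M\to\infty$ since each $\lambda_\theta^{(\ell)}>0$, giving the conjecture.

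The hard step is Step 2. The factorial-moment computation requires carefully discarding configurations where the putative cycles share vertices or edges, or where their union becomes separating — in particular, pairs of disjoint cycles of the same length that cobound an annulus as in Figure~\ref{fig_cylinder}. Each such degenerate family must be shown to contribute only a negligible correction, which demands additional surgery arguments combined with iterated applications of the bounded ratio lemma to quadrangulations with several boundaries. A simpler Chebyshev-type argument would suffice if one merely wants $\limsup_n \P(N_{\leq M}=0)\to 0$, but proving the full Poisson convergence is the natural and most informative route.
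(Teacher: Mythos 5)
This statement is Conjecture~\ref{conj_syst}, and the paper offers no proof of it; it is presented purely as an open problem motivated by the analogy with Weil--Petersson random hyperbolic surfaces. So there is no ``paper's own proof'' to compare against, and your proposal should be judged as a stand-alone attempt. The strategy you sketch is the natural one and is essentially the Mirzakhani--Petri Poisson-limit route for short geodesics, transplanted to quadrangulations; it is a reasonable direction. But as written it is not a proof and contains two genuine logical gaps beyond the step you flag yourself as hard.

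First, Step~1 requires a \emph{lower} bound on $Q^{(\ell,\ell)}(n,g_n-1)$ in terms of $Q(n,g_n)$, and nothing in the paper supplies this. Lemma~\ref{lem_ineq_boundaries} goes in the opposite direction (it upper-bounds $Q^{(p,p')}$ by $Q$), and the bounded ratio lemma only compares $Q(n-1,g)$ with $Q(n,g)$ at fixed genus with no boundary. In the $\ell=1$ case the paper sidesteps this because cutting a digon produces two \emph{edges}, not two genuine boundaries, so the exact identity~\eqref{eq_bij_nonsep} involves only $Q(n,g-1)$; that simplification disappears for $\ell\geq 2$, and an entirely new estimate (or a new inverse surgery on $Q^{(\ell,\ell)}$) would be needed to get $\alpha_\theta^{(\ell)}>0$. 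Second, and more fundamentally, the final deduction is invalid as stated: you conclude that $\exp\bigl(-\sum_{\ell=1}^{M}\lambda_\theta^{(\ell)}\bigr)\to 0$ ``since each $\lambda_\theta^{(\ell)}>0$'', but positivity of each term does not force the series to diverge. If $\sum_\ell \lambda_\theta^{(\ell)}<\infty$ your bound stabilises at a positive constant and the conjecture would not follow. So even granting the Poisson limit in Step~2 (itself unproven), you would still need the quantitative input $\sum_\ell \lambda_\theta^{(\ell)}=\infty$, which is precisely the nontrivial arithmetic of the problem and cannot be waved through. In short: the outline is a sensible research programme consistent with the paper's toolkit, but it does not constitute a proof of Conjecture~\ref{conj_syst}.
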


We conjecture that the shortest non-contractible cycle is non-separating, while the shortest separating non-contractible cycle is actually much bigger.

\begin{conj}\label{conj_sepNC}
For any map $m$, let $SNC(m)$ be the size of the shortest separating non-contractible cycle of $m$. There exists a constant $s_\theta$ such that
\[SNC(q^{(n)})\geq s_\theta \log n\]
whp.
\end{conj}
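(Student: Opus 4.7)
The plan is a first moment computation on the number $N_{\leq 2\ell}$ of marked separating non-contractible simple cycles of length at most $2\ell$ in $q^{(n)}$, where $\ell = s_\theta \log n$. Showing $\E[N_{\leq 2\ell}] = o(1)$ for $s_\theta$ small enough and applying Markov's inequality would yield the result.

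\textbf{Step 1 (combinatorial decomposition).} A quadrangulation with a marked oriented simple cycle $C$ of length $2p$ that is separating and non-contractible can be canonically cut along $C$, producing a pair of rooted quadrangulations with boundary $(q_1, q_2) \in \cQ^{(p)}(n_1, g_1) \times \cQ^{(p)}(n_2, g_2)$ with $n_1+n_2 = n$ and $g_1+g_2 = g_n$. The non-contractibility of $C$ forces $g_1, g_2 \geq 1$: if $g_1 = 0$, then $q_1$ would be a planar quadrangulation with boundary, i.e.\ a disk bounded by $C$, contradicting non-contractibility. Dividing by $Q(n, g_n)$ and summing over $p \leq \ell$ gives
\[\E[N_{\leq 2\ell}] \leq \frac{\mathrm{poly}(n)}{Q(n,g_n)} \sum_{p=1}^{\ell} p \sum_{\substack{n_1+n_2=n \\ g_1+g_2=g_n \\ g_1, g_2 \geq 1}} Q^{(p)}(n_1,g_1)\, Q^{(p)}(n_2,g_2),\]
where the polynomial prefactor accounts for the choice of root of $C$.

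\textbf{Step 2 (removing boundaries).} I would eliminate the boundaries by a standard capping procedure (as in the proofs of Theorems~\ref{thm_planar_neighborhood} and~\ref{thm_short_cycle}) combined with the bounded ratio lemma of \cite{BL20}, to obtain an inequality of the form $Q^{(p)}(n,g) \leq C_\theta^p\, \mathrm{poly}(n)\, Q(n+O(p), g)$. After this reduction, it suffices to control sums of products $Q(n_1, g_1)\, Q(n_2, g_2)$ under the constraints above.

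\textbf{Step 3 (genus-splitting cost).} The heart of the argument is to show that splitting the genus costs more than a polynomial factor in $n$ uniformly over admissible splits:
\[\sum_{\substack{n_1+n_2=n \\ g_1+g_2=g_n \\ g_1, g_2 \geq 1}} Q(n_1, g_1)\, Q(n_2, g_2) \leq n^{-\omega(1)} \cdot Q(n, g_n).\]
Proposition~\ref{prop_injection} yields $Q(n,g)/Q(n,g-1) \leq (2n)^3/(2g)$, which, iterated, shows that lowering the genus by $k$ in the regime $g = g_n \sim \theta n$ costs a factor at least of order $n^{2k(1+o(1))}$ when $k$ is small. Combined with the Carrell-Chapuy formula \cite{CC15}, this should provide matching enumerative upper and lower bounds for $Q(n,g)$ as a function of $g$ near the critical value $\theta n$, and the sum over admissible splits would then concentrate overwhelmingly on "unbalanced" configurations whose individual contributions are exponentially small compared to $Q(n, g_n)$. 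The $C_\theta^p$ factor from Step 2 is only polynomial in $n$ for $p \leq s_\theta \log n$ with $s_\theta$ small, so it is absorbed by this decay.

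\textbf{Main obstacle.} Step 3 is where the argument is genuinely delicate. Proposition~\ref{prop_injection} gives only a one-sided control on the genus ratio, and to conclude one needs a matching lower bound, together with uniform estimates for $Q(n,g)$ when $g/n$ is away from its typical value $\theta$. In substance, this amounts to (at least) a quantitative version of Conjecture~\ref{conj_ratio}, uniform in the parameters $n_1/n$ and $g_1/n_1$, which does not seem to follow from the tools currently available. This is consistent with the statement being posed as a conjecture rather than a theorem: the combinatorial decomposition and first-moment scheme are robust, but the sharp enumerative input needed to make Step 3 rigorous appears to require new ideas.
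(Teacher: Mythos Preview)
The statement you are addressing is Conjecture~\ref{conj_sepNC}, and the paper does \emph{not} prove it: it is explicitly left open, with only a one-line remark that the motivation comes from ``(conjectural) asymptotic estimation of the terms of the Carrell--Chapuy formula~\eqref{eq_CC}'' together with the analogy with continuous hyperbolic models. So there is no ``paper's own proof'' to compare against; your proposal is a heuristic programme for an open problem, and your concluding paragraph already acknowledges this.

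Your outline is the natural one and matches the paper's philosophy (cut along the cycle, remove boundaries via Lemma~\ref{lem_ineq_boundaries} and Lemma~\ref{lem_BRL}, then control the genus-splitting sum). Two comments on where the actual difficulty sits. First, in Step~1 your $\mathrm{poly}(n)$ prefactor is not just ``the choice of root of $C$'' (which is only $O(p)$): unlike the cycles with tails of Proposition~\ref{prop_cycle_tail}, a general separating cycle need not pass through the root of the map, so after cutting one of the two pieces carries the original root in its interior, contributing an honest factor of order $n$. Second, in Step~3 you ask for $n^{-\omega(1)}$ decay, but the paper's own inequality~\eqref{eq_ineq_product} already yields
\[
\sum_{\substack{h_1+h_2=g_n\\h_1,h_2\geq 1}} \sum_{\substack{n_1+n_2=n\\n_1,n_2\geq 1}} Q(n_1,h_1)Q(n_2,h_2)\;\leq\; \frac{C}{n^{1/3}}\,Q(n,g_n),
\]
which was precisely what made Proposition~\ref{prop_cycle_tail} go through. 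The reason this does \emph{not} suffice here is exactly that extra factor of $n$ from the free root: the first moment ends up of order $n^{2/3}$ rather than $o(1)$, so the first-moment method as stated cannot close. Sharpening~\eqref{eq_ineq_product} to $o(n^{-1})$ (morally, showing that the quadratic term in~\eqref{eq_CC} with both genera positive is negligible) would do it, and this is essentially the ``quantitative Conjecture~\ref{conj_ratio}'' input you identify at the end.
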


The motivation for this conjecture might seem a bit more obscure than the previous ones, it actually comes from (conjectural) asymptotic  estimation of the terms of the Carrell--Chapuy formula \eqref{eq_CC}, and again a similar result exists on continuous models \cite{Mir13,PWX20}.

\section{Proof of Proposition~\ref{prop_injection}}
Proposition~\ref{prop_injection} is a quite direct consequence of the bijections of \cite{CMS09} and \cite{Ch11}. We will only briefly recall the details of these two bijections.

A unicellular map is said to be \emph{well-labelled} (see Figure~\ref{fig_well_labeled}) if each of its vertices carries an integer label, and the two following conditions are verified:
\begin{itemize}
\item the minimal label is $1$,
\item the labels of two adjacent vertices differ by at most $1$.
\end{itemize}
Let $\U^{lab}(n,g)$ be the set of well-labeled unicellular maps of genus $g$ with $n$ edges, and $U^{lab}(n,g)$ its cardinal.

We know that there is a $2$-to-$1$ correspondence \cite{CMS09} between $\U^{lab}(n,g)$ and the set of maps of $\cQ(n,g)$ with a distinguished vertex, therefore
\begin{equation}\label{eq_well-lab_quadr}
2U^{lab}(n,g)=(n+2-2g)Q(n,g).
\end{equation}

\begin{figure}
\center
\includegraphics[scale=0.5]{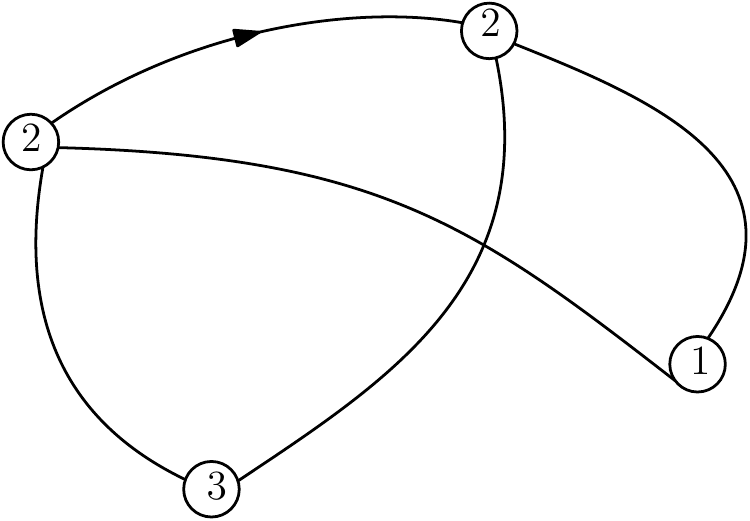}
\caption{A well labeled unicellular map.}\label{fig_well_labeled}
\end{figure}

A \emph{trisection} of a unicellular map is a special corner of this map defined in \cite{Ch11}. We do not give the precise definition of a trisection here, as it is not needed, but we underline two key properties of trisections:
\begin{itemize}
\item if $c$ is a trisection of a unicellular map $U\in\U(n,g)$, let $v$ be the vertex incident to $c$. Then there exist two other corners $c_1$ and $c_2$ incident to $v$ such that $v$ can be split along $c$, $c_1$ and $c_2$ as in Figure~\ref{fig_split_trisection} and that the resulting map belongs to $\U(n,g-1)$,
\item there are $2g$ trisections in a map of $\U(n,g)$.
\end{itemize}

The first property is explained in Section 2.3 together with Definition 2 of \cite{Ch11}. More precisely, in Section 2.3, it is explained that it is possible to perform a "slicing operation" (which we call splitting here) around three "intertwined half edges", and Definition 2 explains how a trisection involves three intertwined half edges. The second property is Lemma 3 in \cite{Ch11}.

\begin{figure}
\center
\includegraphics[scale=0.8]{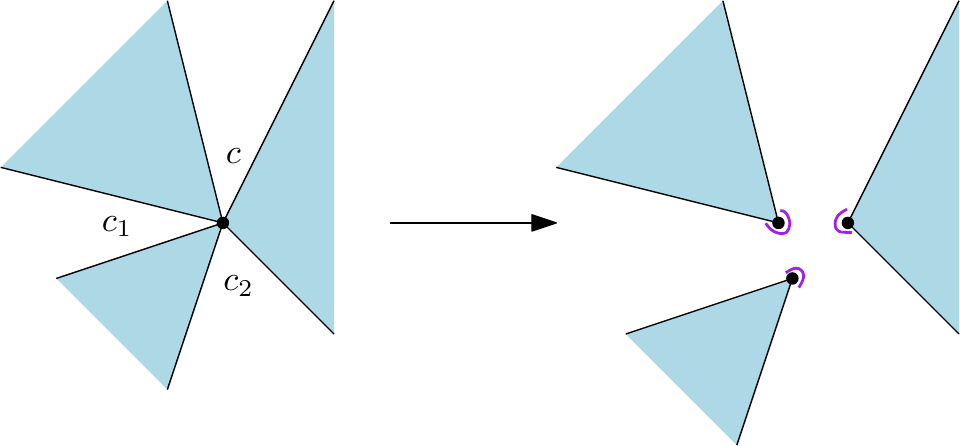}
\caption{Splitting a trisection.}\label{fig_split_trisection}
\end{figure}
The first point provides an injective operation from the set of maps of $\U(n,g)$ with a marked trisection to the set of maps of $\U(n,g-1)$ with three marked corners. Note that this injection adapts to well-labelled unicellular maps, if we decide that the vertex incident to the marked trisection is split into three vertices with the same label.

Now, by the second point, and since there are $2n$ corners in a map with $n$ edges, we have

\begin{equation}\label{eq_injection_trisec}
2gU^{lab}(n,g)\leq (2n)^3U^{lab}(n,g-1).
\end{equation}

If we combine equations~\eqref{eq_well-lab_quadr} and~\eqref{eq_injection_trisec}, we prove Proposition~\ref{prop_injection}.

%\begin{rem}
%A similar statement should hold for other models of maps, but the proof would imply objects called mobiles, and therefore be a bit more technical.
%\end{rem}

\section{Planar neighbourhoods of the root}
In this section, we will prove Theorem~\ref{thm_planar_neighborhood}. We will introduce objects called cycles with tails, that are close to cycles passing through the root. Proposition~\ref{prop_cycle_tail} gives a lower bound on the size of these objects in high genus bipartite quadrangulations, and we will use it to prove Theorem~\ref{thm_planar_neighborhood}.
\subsection{Cycles with tails}
Theorem~\ref{thm_planar_neighborhood} is actually a corollary of the following proposition regarding non-contractible cycles that pass through the root. More precisely, we define a \emph{cycle with tail} to be either a simple non-contractible cycle containing the root edge, or a simple non-contractible cycle attached to a simple path such that the root edge is on one end of the path, and that the vertex on the other end of the path belongs to the cycle, but the cycle and the path do not intersect anywhere else. The size of a cycle with tail $(P,C)$, noted $|(P,C)|$, is the number of edges it contains, i.e. $|(P,C)|=|P|+|C|$.

\begin{prop}\label{prop_cycle_tail}
Let $\ct(q^{(n)})$ be the size of the smallest cycle with tail in $q^{(n)}$. Then there exists a constant $c_\theta$ such that
\[\ct(q^{(n)})\geq c_\theta \log n \]
whp.
\end{prop}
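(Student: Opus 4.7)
My plan is a first-moment / union-bound argument. For each $k \geq 2$, let $N_k$ denote the number of cycles-with-tails of size exactly $k$ in $q^{(n)}$; the goal is to show that, for a small enough constant $c_\theta = c_\theta(\theta)$,
\[
\sum_{k=2}^{\lfloor c_\theta \log n \rfloor} \mathbb{E}[N_k] \xrightarrow[n\to\infty]{} 0,
\]
whence Markov's inequality gives $\ct(q^{(n)}) \geq c_\theta \log n$ whp.

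The central step is a topological surgery. Given $q \in \mathcal{Q}(n, g_n)$ equipped with a cycle-with-tail $(P, C)$ of tail length $\ell$ and cycle length $m = 2p$, I would cut $q$ open along the embedded ``lollipop'' $P \cup C$. A regular neighborhood of a lollipop in an orientable surface is an annulus whose two boundary circles, in the quadrangulation, have lengths $2p$ and $2p + 2\ell$ (the latter being the side of $C$ where the tail attaches). Since $C$ is non-contractible, there are two cases:
\begin{itemize}
\item[(a)] if $C$ is non-separating, the cut yields a single bipartite quadrangulation in $\mathcal{Q}^{(p+\ell,\, p)}(n, g_n - 1)$;
\item[(b)] if $C$ is non-contractible but separating, the cut yields a pair of bipartite quadrangulations in $\mathcal{Q}^{(p+\ell)}(n_1, g_1) \times \mathcal{Q}^{(p)}(n_2, g_2)$ with $n_1 + n_2 = n$, $g_1 + g_2 = g_n$, and $g_1, g_2 \geq 1$ (else $C$ would bound a disk on one side).
\end{itemize}
The inverse of the surgery (marking the root vertex on the long boundary, orienting it, and choosing a rotation for the cycle-boundary identification) has only $O(k^2)$ multiplicity.

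To bound $\mathbb{E}[N_k]$, I would combine this surgery with two ingredients. First, Proposition~\ref{prop_injection} gives $Q(n, g_n) / Q(n, g_n - 1) \leq (2n)^3/(2 g_n) = O(n^2)$ in the regime $g_n / n \to \theta$. Second, estimates from \cite{BL20} (typified by the bounded ratio lemma) would allow comparing $Q^{(p+\ell,\, p)}(n, g_n - 1)$ to $Q(n, g_n - 1)$ up to a factor of the form $n^{O(1)}\, \mu^{k}$ for some constant $\mu = \mu(\theta)$. In case (a) this gives
\[
\mathbb{E}[N_k \cdot \mathbf{1}_{(a)}] \leq C_\theta\, k^2\, \mu^k\, n^{O(1)}\, n^{-2}.
\]
Case (b) is handled in the same spirit, replacing Proposition~\ref{prop_injection} by the Carrell--Chapuy formula, which expresses $n\, Q(n, g_n)$ as a sum that includes the genus-splitting terms $Q(n_1, g_1) Q(n_2, g_2)$ with $g_1 + g_2 = g_n$, and therefore lets one bound the analogous ratio. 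Summing over $k \leq c_\theta \log n$ yields a total of order $(\log n)^{O(1)}\, n^{c_\theta \log \mu + O(1) - 2}$, which is $o(1)$ as soon as $c_\theta < 2/\log \mu$.

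The main obstacle I expect is case (b): splitting into two positive-genus pieces does not directly benefit from the genus-reduction provided by Proposition~\ref{prop_injection}, so the Carrell--Chapuy recursion is essential to compare $\sum Q^{(p+\ell)}(n_1, g_1) Q^{(p)}(n_2, g_2)$ with $Q(n, g_n)$ without losing too much. A secondary technical point is to verify that the comparison between quadrangulations with and without boundary costs only an exponential factor in the perimeter $k$; the admissible constant $c_\theta$ is then determined by this exponential rate.
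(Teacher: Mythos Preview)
Your strategy --- first-moment bound via surgery along the lollipop, splitting into the non-separating and separating cases --- is exactly the paper's, and your $\mu^k$ is the bounded ratio lemma with $\mu = 1/C_{\eps_\theta}$. But there is a directional error in case~(a). Proposition~\ref{prop_injection} reads $2g\,Q(n,g) \leq (2n)^3 Q(n,g-1)$, which gives
\[
\frac{Q(n,g_n-1)}{Q(n,g_n)} \;\geq\; \frac{2g_n}{(2n)^3},
\]
a \emph{lower} bound on this ratio, not the upper bound you need to produce the factor $n^{-2}$ in your expectation. The required upper bound $Q(n,g_n-1)/Q(n,g_n) \leq (C_{\eps_\theta}^2 n^2)^{-1}$ is the paper's Lemma~\ref{lem_ineq_genus}, and it comes from isolating the term $(2n-2)(n-1)(2n-1)\,Q(n-2,g-1)$ in the Carrell--Chapuy recursion~\eqref{eq_CC} and then applying the bounded ratio lemma twice. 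So in case~(a) the polynomial gain is provided by Carrell--Chapuy, not by Proposition~\ref{prop_injection}.

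Proposition~\ref{prop_injection} \emph{is} used, but in case~(b) and for the opposite purpose. When $C$ is separating and one side has few faces ($n_2 \leq n^{1/3}$, say), the Carrell--Chapuy sum alone is not sharp enough; the paper uses Proposition~\ref{prop_injection} iteratively to push genus from the small piece to the large one (Lemma~\ref{lem_n2_petit}), bounding $Q(n_1,h_1)Q(n_2,h_2)$ by a small multiple of $Q(n_1,g_n)Q(n_2,0)$. The balanced case $n_2 \geq n^{1/3}$ is handled directly from~\eqref{eq_CC} (Lemma~\ref{lem_product_two_big_parts}). In short, the two ingredients are swapped relative to your sketch: Carrell--Chapuy supplies the genus drop in~(a), and Proposition~\ref{prop_injection} supplies the genus transfer in the degenerate part of~(b).
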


Before proving this proposition, we first prove that it implies Theorem~\ref{thm_planar_neighborhood}.

\begin{proof}[Proof of Theorem~\ref{thm_planar_neighborhood}]
Let $r$ such that $B_r(q^{(n)})$ contains a non-contractible cycle $\C$ of $q^{(n)}$ (again, $B_r(q^{(n)})$ might be planar, see Figure~\ref{fig_cylinder}). In what follows, we will only focus on the map $B_r(q^{(n)})$ and not $q^{(n)}$, all the lengths and distances are to be understood inside $B_r(q^{(n)})$. Assume $\C$ is of minimal length. If the root edge of $q^{(n)}$ does not belong to $\C$, let $(u,w)$ be its endpoints, and take a vertex $v\in \C$ such that
\[\min(d(v,u),d(v,w))\]
is minimal. Wlog, say that $d(v,u)=\min(d(v,u),d(v,w))$, and let $\cP$ be a shortest path from $w$ to $v$ starting with the root edge ($\cP$ exists since $v$ is closer to $u$ than it is to $w$). This ensures that $|\cP|\leq r+1$ and that $\cP$ and $\C$ intersect only at $v$. Hence, $(\cP,\C)$ is a cycle with tail, and since $B_r(q^{(n)})\subset q^{(n)}$, we have $|(\cP,\C)|\geq \ct(q^{(n)})$. Using Proposition~\ref{prop_cycle_tail}, we can conclude that
\begin{equation}\label{eq_borne_plus_petit_cycle}
|\C|\geq c_\theta \log n - r-1
\end{equation}
whp.

Now, let $v_1,v_2$ be a pair of vertices on $\C$ such that $d(v_1,v_2)$ is maximal. We can decompose  $\C$ into two paths $\cP'$ and $\cP''$ joining $v_1$ and $v_2$. Then, either $|\cP'|=d(v_1,v_2)$ or $|\cP''|=d(v_1,v_2)$. Indeed, if there exists a path $\cP^*$ between $v_1$ and $v_2$ satisfying $|\cP^*|<\min(|\cP'|,|\cP''|)$, then either $\cP'\cup\cP^*$ or $\cP''\cup\cP^*$ is a non-contractible cycle of length strictly shorter than $\C$, a contradiction (see Figure~\ref{fig_smallest_cycle} left). Wlog, say that $|\cP''|=d(v_1,v_2)$
 
Now, let $v_3$ be a vertex on $\cP'$, and say it separates $\cP'$ into $\cP_1$ (containing $v_1$) and $\cP_2$ (containing $v_2$). Then again, either $\cP_1$ or $\cP_2\cup\cP''$ is a path of minimal length between $v_1$ and $v_3$. But it cannot be $\cP_2\cup\cP''$, because it is strictly longer than $d(v_1,v_2)$, which was supposed to be maximal. Therefore, $|\cP_1|$ is less than or equal to $d(v_1,v_2)$, and the same goes for $|\cP_2|$ (see Figure~\ref{fig_smallest_cycle} right). Since we are in $B_r(q^{(n)})$, we also have $d(v_1,v_2)\leq 2r$, therefore
 
\begin{equation}\label{eq_cycle_rayon}
|\C|\leq |\cP''|+|\cP_1|+|\cP_2|\leq 3d(v_1,v_2) \leq 6r.
\end{equation}

Combining \eqref{eq_borne_plus_petit_cycle} and \eqref{eq_cycle_rayon}, we obtain
\[7r+1\geq c_\theta \log n\]
whp, which proves Theorem~\ref{thm_planar_neighborhood} for $a_\theta=\frac{c_\theta}{7}$.
\end{proof}

\begin{figure}
\center
\includegraphics[scale=0.5]{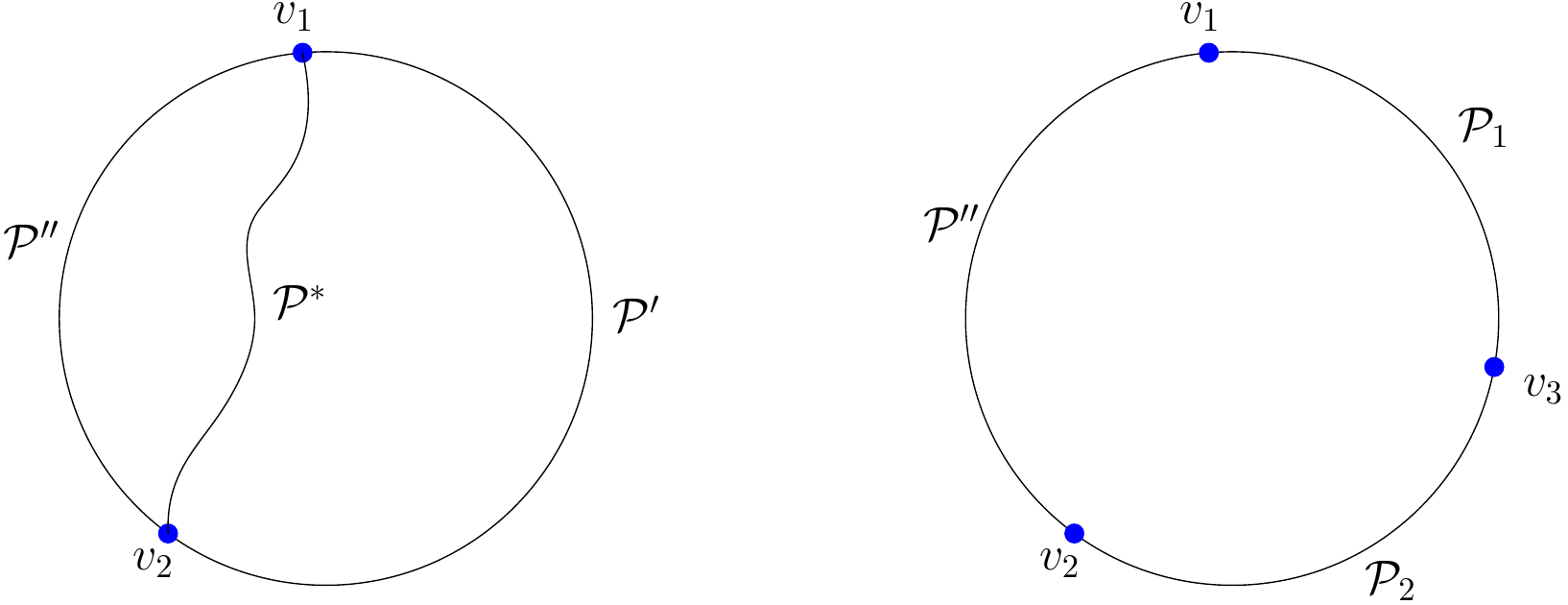}
\caption{Left: the cycle $\C$ contains a shortest path between $v_1$ and $v_2$. Right: any vertex on $\cP'$ splits it into to shortest paths towards $v_1$ and $v_2$.}\label{fig_smallest_cycle}
\end{figure}
\subsection{Two useful results}
Here we present two results  proved in previous works that will be useful for the proof of Proposition~\ref{prop_cycle_tail}. First we have the Carrell--Chapuy formula \cite{CC15}, which is a recurrence formula for enumerating bipartite quadrangulations in any genus. For every $n\geq 1,g\geq 0$, we have

\begin{equation}\label{eq_CC}
\begin{split}
(n+1)Q(n,g)=&4(2n-1)Q(n-1,g)+(2n-2)(n-1)(2n-1)Q(n-2,g-1)\\&+3\sum_{\substack{n_1+n_2=n-2\\n_1,n_2\geq 0}}\sum_{\substack{g_1+g_2=g\\g_1,g_2\geq 0}}(2n_1+1)Q(n_1,g_1)(2n_2+1)Q(n_2,g_2)
\end{split}
\end{equation}
with initial condition $Q(0,g)=\mathbbm{1}_{g=0}$.

Then we have the bounded ratio lemma  \cite[Lemma 13]{BL20}, that controls a certain growth rate for high genus maps:

\begin{lem}[The bounded ratio lemma]\label{lem_BRL}
For all $\eps>0$, there exists a constant $C_\eps>0$ such that for all $g\geq 0,n\geq 1$ satisfying $\frac{g}{n}\leq \frac{1}{2}-\eps$, we have
\[\frac{Q(n-1,g)}{Q(n,g)}\geq C_\eps.\]
\end{lem}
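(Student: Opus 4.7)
The plan is to prove the lemma by strong induction on $n$, using the Carrell--Chapuy recursion \eqref{eq_CC} as the main engine and Proposition~\ref{prop_injection} to handle terms that change the genus. Set $r_{n,g}:=Q(n,g)/Q(n-1,g)$; the task reduces to showing that $r_{n,g}$ is bounded above by a constant depending only on $\eps$, uniformly for $g/n\leq 1/2-\eps$.

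Dividing \eqref{eq_CC} through by $Q(n-1,g)$ rewrites the recurrence as
\[(n+1)\,r_{n,g}\;=\;4(2n-1)\;+\;H_{n,g}\;+\;S_{n,g},\]
where $H_{n,g}=(n-1)(2n-2)(2n-1)\,\tfrac{Q(n-2,g-1)}{Q(n-1,g)}$ is the handle contribution (from adding one handle and deleting two faces) and $S_{n,g}=3\sum(2n_1+1)(2n_2+1)\,\tfrac{Q(n_1,g_1)Q(n_2,g_2)}{Q(n-1,g)}$ is the splitting sum. Keeping only the linear term already gives the trivial lower bound $r_{n,g}\geq 4(2n-1)/(n+1)\to 8$; the job of the proof is to produce a matching upper bound by controlling $H_{n,g}$ and $S_{n,g}$ to be of order $n$.

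For $H_{n,g}$, I would factor the ratio through intermediate parameters, e.g.\ write
\[\frac{Q(n-2,g-1)}{Q(n-1,g)}\;=\;\frac{Q(n-2,g-1)}{Q(n-1,g-1)}\cdot\frac{Q(n-1,g-1)}{Q(n-1,g)},\]
and control each factor separately: the first by the induction hypothesis applied at $(n-1,g-1)$ (where $(g-1)/(n-1)\leq g/n\leq 1/2-\eps$ still holds), and the second by combining Proposition~\ref{prop_injection} with the crude lower bound $Q(n,g)\geq 8\,Q(n-1,g)$ that \eqref{eq_CC} itself gives. The goal is the bound $Q(n-2,g-1)/Q(n-1,g)=O(1/n^2)$, which matches the scaling predicted by Proposition~\ref{prop_injection} (a handle ``costs'' a factor $\Theta(g/n^3)$ with $g=\Theta(n)$) and keeps $H_{n,g}=O(n)$.

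The main obstacle is the splitting term $S_{n,g}$. Each summand can be compared to $Q(n-1,g)$ through a combinatorial gluing that joins two rooted quadrangulations of genera $g_1,g_2$ with total size $n-2$ into one of genus $g=g_1+g_2$, but the sum has $\Theta(n^2)$ terms (from the pair $(n_1,g_1)$) and a naive term-by-term bound loses a factor $n$. To obtain the sharp $S_{n,g}=O(n)$, I would split the summation into two regimes: \emph{unbalanced} splits, where one of the pieces has size $O(1)$ or genus $0$ and the corresponding term is controlled by a uniform bound on small-size quadrangulations together with the induction hypothesis; and \emph{balanced} splits, where one must exploit the fact that Proposition~\ref{prop_injection} penalises the creation of many handles, so that balanced partitions of the genus $g=g_1+g_2$ make $Q(n_1,g_1)Q(n_2,g_2)$ strictly smaller than $Q(n-1,g)$ by a polynomial factor that compensates the quadratic number of summands. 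This balance analysis---essentially a two-variable induction in $(n,g)$ wrapped around the Carrell--Chapuy recursion---is where the heart of the proof lies.
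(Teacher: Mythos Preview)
This lemma is not proved in the present paper: it is imported from \cite[Lemma~13]{BL20} and used as a black box. So there is no in-paper proof for your attempt to be compared against.

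On its own merits, your sketch has a structural gap: the induction does not close. You want $r_{n,g}\leq M_\eps$ by strong induction, but the bounds you propose on $H_{n,g}$ feed the induction constant back into itself with exponent at least~$2$. Concretely:
\begin{itemize}
\item For the first factor $Q(n-2,g-1)/Q(n-1,g-1)=1/r_{n-1,g-1}$, the induction hypothesis is an \emph{upper} bound on $r_{n-1,g-1}$ and therefore gives a \emph{lower} bound on this ratio---the wrong direction. The upper bound you actually need here comes from the trivial estimate $r_{n-1,g-1}\geq 4(2n-3)/n$, not from the induction hypothesis.
\item For the second factor $Q(n-1,g-1)/Q(n-1,g)$, Proposition~\ref{prop_injection} also points the wrong way: it says $Q(n-1,g-1)\geq \tfrac{2g}{(2n-2)^3}\,Q(n-1,g)$, again a lower bound. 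The only route to an upper bound of order $1/n^{2}$ is via the handle term of \eqref{eq_CC} at $(n-1,g)$, giving $Q(n-1,g)\geq c\,n^{2}Q(n-3,g-1)$, and then the induction hypothesis applied \emph{twice} to compare $Q(n-3,g-1)$ with $Q(n-1,g-1)$. That step costs a factor $M_\eps^{2}$.
\end{itemize}
Already from the handle contribution alone this yields $r_{n,g}\leq 8+C\,M_\eps^{2}+S_{n,g}/(n+1)$, and any treatment of $S_{n,g}$ along the lines you describe (comparing each summand to $Q(n-1,g)$ via the induction hypothesis and Proposition~\ref{prop_injection}) brings in further powers of $M_\eps$. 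The closing requirement $M_\eps\geq 8+C\,M_\eps^{2}+\cdots$ has no solution. This feedback is exactly why the proof in \cite{BL20} does not run as a direct induction on \eqref{eq_CC}; it requires a genuinely different argument, and your sketch---which in any case leaves the ``heart of the proof'' (the balanced regime of $S_{n,g}$) entirely unspecified---does not supply one.
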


\subsection{Technical lemmas}
This section regroups a few technical lemmas that we will need to prove Proposition~\ref{prop_cycle_tail}. More precisely, we will use Lemmas~\ref{lem_ineq_boundaries} and~\ref{lem_ineq_genus}, as well as \eqref{eq_ineq_product}. The other lemmas of this section will only be used to establish \eqref{eq_ineq_product}.

We start with a bound on maps with boundaries.

\begin{lem}\label{lem_ineq_boundaries}
We have the following inequalities for all $n\geq 1, g\geq 0,p\geq 1, p'\geq 1$
\[Q^{(p)}(n,g)\leq Q(n+p-1,g)\quad \mbox{and} \quad Q^{(p,p')}(n,g)\leq 2(n+p+p'-2)Q(n+p+p'-2,g).\]
\end{lem}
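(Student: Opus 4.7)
The plan is to prove both inequalities by a single combinatorial operation, the ``fan-fill'', which canonically closes each boundary by inserting quadrangles inside it. Given a map in $\cQ^{(p)}(n,g)$, the idea is to partition the boundary $2p$-gon into $p-1$ quadrangles using diagonals emanating from the root vertex, producing a closed bipartite quadrangulation of the same genus with $n+p-1$ faces.

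First I would describe the fan-fill explicitly. Label the boundary vertices in cyclic order $v_1, v_2, \ldots, v_{2p}$, where $v_1$ is the (white) root vertex and $(v_1, v_2)$ is the root edge; the $v_k$ alternate colours. For $p \geq 2$, add the $p-2$ diagonals $(v_1, v_{2i})$ for $i = 2, \ldots, p-1$ inside the boundary face. Each diagonal joins a white to a black vertex, and each of the $p-1$ new faces $\{v_1, v_{2i}, v_{2i+1}, v_{2i+2}\}$ is a proper bipartite quadrangle. The operation stays inside a single face, which is a topological disk, so it preserves the genus and the bipartite structure. For $p = 1$ the boundary is already a digon; one instead identifies its two edges, removing the boundary face, which reduces the face count by $1$ and preserves the genus by a direct Euler check. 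In either case the resulting map $\phi(q)$ lies in $\cQ(n+p-1, g)$, with the root of $q$ kept as root.

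The next step is injectivity, where $(n, p, g)$ are fixed on both sides of the inequality. Given $\phi(q)$, the face on the right of the root must be the first fan-quadrangle $\{v_1, v_2, v_3, v_4\}$; crossing the edge of this face that is incident to $v_1$ but not to $v_2$ (which is the diagonal $(v_1, v_4)$) reveals the second fan-quadrangle, and iterating this procedure for exactly $p-1$ steps recovers all added quadrangles. Deleting the $p-2$ diagonals restores the $2p$-gon boundary and the map $q$. The $p = 1$ case similarly re-splits the merged edge to recover the digon. This proves $Q^{(p)}(n, g) \leq Q(n+p-1, g)$.

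For the second inequality, I would apply the fan-fill to both boundaries of a map in $\cQ^{(p,p')}(n, g)$. The first root is preserved as the root of the closed map $\bar q \in \cQ(n+p+p'-2, g)$, and the second root, still an edge of $\bar q$, is kept as a distinguished (marked) edge. The reconstruction uses the root to undo the first fan-fill and the marked edge (together with the canonical white-to-black orientation and right-face convention) to undo the second fan-fill. Since a closed bipartite quadrangulation with $n+p+p'-2$ faces has exactly $2(n+p+p'-2)$ edges (because $E = 2F$ in a quadrangulation), this yields $Q^{(p,p')}(n, g) \leq 2(n+p+p'-2) Q(n+p+p'-2, g)$. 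The main subtlety I would watch for is that fan-diagonals may duplicate pre-existing edges, producing multi-edges in $\phi(q)$; this is harmless because $\cQ(\cdot, g)$ counts maps with multi-edges and the reconstruction uses face adjacency in $\phi(q)$ rather than distinctness of edges between a given vertex pair.
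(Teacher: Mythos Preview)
Your proof is correct and follows essentially the same approach as the paper: fill each boundary $2p$-gon with $p-1$ bipartite quadrangles (contracting the digon when $p=1$), keep the first root as the root of the closed map, and turn the second root into a marked edge to account for the factor $2(n+p+p'-2)$. Your write-up is in fact more explicit than the paper's, which simply invokes a tessellation figure; your careful treatment of injectivity (using that $p$ is fixed) and of possible multi-edges is welcome, though the latter point is implicit in the paper since maps are always allowed multi-edges.
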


\begin{proof}
We will prove the inequalities by an injective operation. Start with a map of $\cQ^{(p)}(n,g)$. If $p=1$, contract the boundary into the root edge. Otherwise, tessellate the boundary with $p-1$ quadrangles as in Figure~\ref{fig_tesselating} to obtain a map of $\cQ(n+p-1,g)$.

The proof of the second inequality is very similar, except that the second root becomes a marked edge, hence the factor $2(n+p+p'-2)$.
\begin{figure}
\center
\includegraphics[scale=0.5]{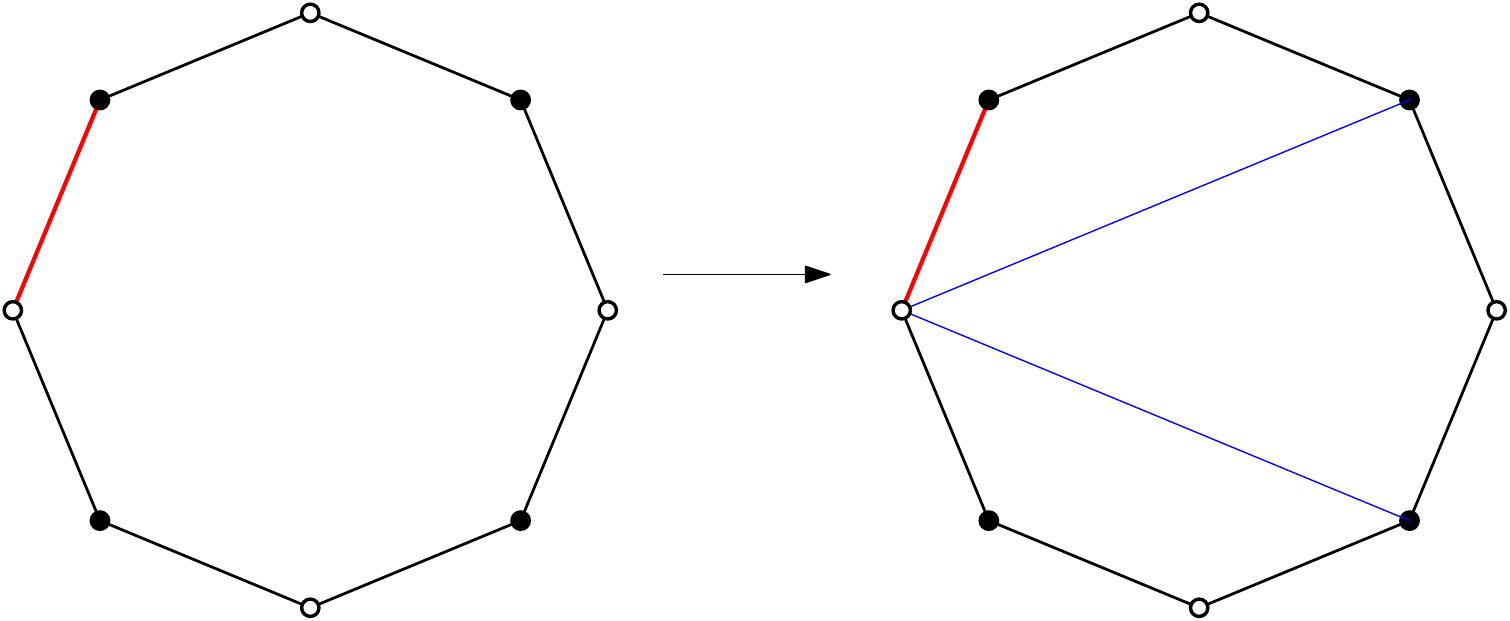}
\caption{Tessellating the boundary. Here $p=4$, the root is in red.}\label{fig_tesselating}
\end{figure}
\end{proof}

The next lemma is in some sense the reverse inequality of Proposition~\ref{prop_injection}.

\begin{lem}\label{lem_ineq_genus}
If $n$ is large enough and $\frac{g}{n}\leq \frac{1}{2
}-\eps$, then the following inequality holds:
\[Q(n,g)\geq C_\eps^2 n^2Q(n,g-1),\] where $C_\eps$ is defined in Lemma~\ref{lem_BRL}.

\end{lem}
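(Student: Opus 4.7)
The plan is to extract a single term from the Carrell--Chapuy recursion and combine it with two applications of the bounded ratio lemma. Dropping all but the second term on the right-hand side of \eqref{eq_CC} (this is legitimate since every term is non-negative, with the convention $Q(\cdot,g') = 0$ for $g' < 0$), we get
\[(n+1)Q(n,g) \;\geq\; (2n-2)(n-1)(2n-1)\,Q(n-2,g-1).\]

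Next I would use Lemma~\ref{lem_BRL} twice to replace $Q(n-2,g-1)$ by $Q(n,g-1)$, up to a factor $C_\eps^2$. For this to be legal, I have to check that BRL's hypothesis is satisfied for the parameters $(n, g-1)$ and $(n-1, g-1)$. Writing $a = \frac{1}{2} - \eps$, the assumption $\frac{g}{n} \leq a$ gives $g \leq an$, hence
\[\frac{g-1}{n-1} = \frac{g-1}{n-1} \leq \frac{an - 1}{n-1} = a + \frac{a-1}{n-1} \leq a,\]
since $a < 1$. Consequently $(g-1)/n \leq (g-1)/(n-1) \leq a$, so BRL applies at both $(n, g-1) \to (n-1,g-1)$ and $(n-1,g-1)\to (n-2,g-1)$, yielding
\[Q(n-2,g-1) \;\geq\; C_\eps\,Q(n-1,g-1) \;\geq\; C_\eps^2\,Q(n,g-1).\]

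Plugging this into the Carrell--Chapuy inequality gives
\[Q(n,g) \;\geq\; \frac{(2n-2)(n-1)(2n-1)}{n+1}\, C_\eps^2\, Q(n,g-1),\]
and the rational prefactor is $\sim 4n^2$ as $n\to\infty$, so in particular it is $\geq n^2$ for all sufficiently large $n$. This gives exactly the claimed inequality $Q(n,g) \geq C_\eps^2 n^2 Q(n,g-1)$.

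There is no real obstacle here: the only points to be cautious about are (i) that the dropped terms in \eqref{eq_CC} are indeed all non-negative (true, with the zero convention for negative genus), and (ii) that the BRL hypothesis is preserved when we shift both $n$ and $g$ down by one simultaneously, which is the short computation above. The only reason the statement requires $n$ to be large enough is to absorb the rational prefactor $(2n-2)(n-1)(2n-1)/(n+1)$ into $n^2$.
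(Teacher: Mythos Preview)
Your proof is correct and follows exactly the same approach as the paper: extract the $(2n-2)(n-1)(2n-1)Q(n-2,g-1)$ term from \eqref{eq_CC}, apply the bounded ratio lemma twice to pass from $Q(n-2,g-1)$ to $Q(n,g-1)$, and absorb the rational prefactor into $n^2$ for $n$ large. You are in fact slightly more careful than the paper, since you verify explicitly that the hypothesis of Lemma~\ref{lem_BRL} is preserved at the shifted parameters $(n-1,g-1)$ and $(n,g-1)$, a point the paper leaves implicit.
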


\begin{proof}
From $\eqref{eq_CC}$, we directly have
\[Q(n,g)\geq \frac{(2n-2)(n-1)(2n-1)}{n+1}Q(n-2,g-1),\]
and by Lemma~\ref{lem_BRL}, we have $Q(n-2,g-1)\geq C_\eps^2Q(n,g-1)$, and since for $n$ large enough we have $\frac{(2n-2)(n-1)(2n-1)}{n+1}\geq n^2$, the proof is finished.
\end{proof}

Our next goal is to upper bound the sum \[\sum_{\substack{h_1+h_2=g_n\\h_1,h_2\geq 1}} \sum_{n_1+n_2=n}Q(n_1,h_1)Q(n_2,h_2)\] to obtain \eqref{eq_ineq_product}.

Let us introduce the constant $\eps_\theta=\frac{1}{2}\left(\frac{1}{2}-\theta\right)$. The following two lemmas provide estimations of the terms of this sum, in two cases. We start with the case where both $n_1$ and $n_2$ are big enough.

\begin{lem}\label{lem_product_two_big_parts}
If $n$ is large enough, we have the following inequality:
\[\sum_{\substack{h_1+h_2=g_n\\h_1,h_2\geq 1}} \sum_{\substack{n_1+n_2=n\\n_1\geq n_2\geq n^{1/3}}}Q(n_1,h_1)Q(n_2,h_2)\leq \frac{2}{C_{\eps_\theta}^2n^{1/3}}Q(n,g_n).\]
\end{lem}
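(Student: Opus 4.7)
The plan is to derive the bound directly from the Carrell--Chapuy formula~\eqref{eq_CC}. First I drop the two nonnegative summands $4(2n-1)Q(n-1,g_n)$ and $(2n-2)(n-1)(2n-1)Q(n-2,g_n-1)$ on the right-hand side of~\eqref{eq_CC}, then I shift the summation indices by one so that the convolution is indexed by $n_1+n_2 = n$ (rather than by $n_1+n_2=n-2$). This yields an inequality of the form
\[
3 \sum_{\substack{h_1+h_2 = g_n\\ h_1, h_2 \geq 0}} \sum_{\substack{n_1 + n_2 = n\\ n_1, n_2 \geq 1}} (2n_1-1)(2n_2-1)\, Q(n_1-1, h_1)\, Q(n_2-1, h_2) \leq (n+1)\, Q(n, g_n).
\]
Restricting the index set to $n_1 \geq n_2 \geq n^{1/3}$ and $h_1, h_2 \geq 1$ only removes nonnegative terms, so the inequality persists and the range of summation now matches the statement of the lemma.

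Next I use the bounded ratio lemma (Lemma~\ref{lem_BRL}) with $\eps = \eps_\theta$ on each factor $Q(n_i - 1, h_i)$, replacing it by $C_{\eps_\theta}\, Q(n_i, h_i)$. This transforms the previous display into
\[
3 C_{\eps_\theta}^2 \sum (2n_1-1)(2n_2-1)\, Q(n_1, h_1)\, Q(n_2, h_2) \leq (n+1)\, Q(n, g_n).
\]
To finish, I estimate the combinatorial weights: the constraints $n_1+n_2 = n$ and $n_1\geq n_2 \geq n^{1/3}$ force $n_1 \geq n/2$, hence $n_1 n_2 \geq \tfrac{1}{2}\, n^{4/3}$, and for $n$ large enough $(2n_1 - 1)(2n_2 - 1) \geq n^{4/3}$. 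Dividing both sides by $3 C_{\eps_\theta}^2 n^{4/3}$ and using $(n+1)/(3 n^{4/3}) \leq 2/n^{1/3}$ for $n$ sufficiently large then gives exactly the claimed inequality.

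The main delicate point is the applicability of Lemma~\ref{lem_BRL} in the second step: its hypothesis $h_i / n_i \leq 1/2 - \eps_\theta$ is not automatic, since $h_i$ could a priori be as large as $(n_i - 1)/2$. However, because $h_1 + h_2 = g_n$ is asymptotic to $(1/2 - 2\eps_\theta)\, n$ while $n_1 + n_2 = n$, the two ratios $h_i / n_i$ cannot simultaneously exceed $1/2 - \eps_\theta$ for $n$ large; at most one index lies in the bad window $(1/2 - \eps_\theta, 1/2]$, and a small case analysis on those boundary terms (where the offending quadrangulation count is itself very small, since its Euler vertex count $n_i + 2 - 2 h_i$ is comparatively tiny) should dispose of them and complete the argument.
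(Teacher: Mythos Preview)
Your overall strategy matches the paper's --- use the Carrell--Chapuy formula to bound the convolution and then invoke the bounded ratio lemma --- but there is a genuine gap in the second step, and you in fact flag it yourself.

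When you replace $Q(n_i-1,h_i)$ by $C_{\eps_\theta}Q(n_i,h_i)$ via Lemma~\ref{lem_BRL}, you need $h_i/n_i\le \tfrac12-\eps_\theta$ for \emph{every} term in the sum. This fails: for instance $n_2$ can be as small as $\lceil n^{1/3}\rceil$ while $h_2$ can be as large as $\lfloor n_2/2\rfloor$, so $h_2/n_2$ can sit at (or arbitrarily close to) $1/2$. Your closing paragraph acknowledges this and proposes to dispose of the bad terms by arguing that $Q(n_i,h_i)$ is ``very small'' when $n_i+2-2h_i$ is tiny; but no such quantitative bound is available in the paper, and turning that intuition into an inequality strong enough to absorb the missing factor $C_{\eps_\theta}$ (uniformly over all bad $(n_i,h_i)$) is not routine. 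As written, the argument is incomplete.

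The paper sidesteps the issue entirely by applying~\eqref{eq_CC} at index $n+2$ rather than $n$. Then the quadratic term is already
\[
3\sum_{\substack{n_1+n_2=n\\ n_1,n_2\ge 0}}\sum_{\substack{g_1+g_2=g_n\\ g_1,g_2\ge 0}}(2n_1+1)(2n_2+1)\,Q(n_1,g_1)\,Q(n_2,g_2)\ \le\ (n+3)\,Q(n+2,g_n),
\]
with no shift in the $Q$-arguments. Restricting to $h_1,h_2\ge 1$ and $n_1\ge n_2\ge n^{1/3}$, and lower-bounding $(2n_1+1)(2n_2+1)$ by $\tfrac{n}{2}\cdot n^{1/3}$, gives the desired sum on the left; Lemma~\ref{lem_BRL} is then applied \emph{once}, to pass from $Q(n+2,g_n)$ to $Q(n,g_n)$, and there the hypothesis $g_n/(n+2)\le \tfrac12-\eps_\theta$ is clear for large $n$. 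If you reorganise your argument this way, no case analysis is needed.
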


\begin{proof}
It follows from \eqref{eq_CC} (by forgetting certain terms and constants) and Lemma~\ref{lem_BRL}, that 
\[\sum_{\substack{h_1+h_2=g_n\\h_1,h_2\geq 1}} \sum_{\substack{n_1+n_2=n\\n_1,n_2\geq 0}} n_1Q(n_1,h_1)n_2Q(n_2,h_2)\leq nQ(n+2,g_n)\leq n\frac{1}{\Ce^2}Q(n,g_n),\] and therefore 
\[\sum_{\substack{h_1+h_2=g_n\\h_1,h_2\geq 1}} \sum_{\substack{n_1+n_2=n\\n_1\geq n_2\geq n^{1/3}}} \frac{n}{2}Q(n_1,h_1)n^{1/3}Q(n_2,h_2)\leq n\frac{1}{\Ce^2}Q(n,g_n),\]
which concludes the proof.
\end{proof}

Now we cover the case where $n_2$ is small.

\begin{lem}\label{lem_n2_petit}
For $n$ large enough, we have the following inequality
\[\sum_{\substack{h_1+h_2=g_n\\h_1,h_2\geq 1}} \sum_{\substack{n_1+n_2=n\\1\leq n_2\leq n^{1/3}}}Q(n_1,h_1)Q(n_2,h_2)\leq (1+o(1))\frac{16}{\Ce^4n}Q(n,g_n).\]
\end{lem}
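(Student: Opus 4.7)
The plan is to apply the Carrell--Chapuy formula~\eqref{eq_CC} at the point $(n+2,g_n)$ and combine it with Lemma~\ref{lem_BRL} and Lemma~\ref{lem_ineq_genus}, separating the dominant $h_2=1$ contribution from the negligible $h_2\ge 2$ tail.

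First, I would apply \eqref{eq_CC} at $(n+2,g_n)$ and drop the two non-negative terms $4(2n+3)Q(n+1,g_n)$ and $(2n+2)(n+1)(2n+3)Q(n,g_n-1)$ on the right-hand side. This yields
\[
3\sum_{\substack{m_1+m_2=n\\ k_1+k_2=g_n}}(2m_1+1)(2m_2+1)\,Q(m_1,k_1)\,Q(m_2,k_2)\;\le\;(n+3)\,Q(n+2,g_n).
\]
Two applications of Lemma~\ref{lem_BRL} give $Q(n+2,g_n)\le C_{\eps_\theta}^{-2}Q(n,g_n)$, contributing the first factor $C_{\eps_\theta}^{-2}$ in the target constant.

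Next I would restrict the convolution to $k_1,k_2\ge 1$ and $1\le m_2\le n^{1/3}$. In this range $m_1=n-m_2\ge n-n^{1/3}$, so $(2m_1+1)\ge(2-o(1))n$ and $(2m_2+1)\ge 3$. Factoring these lower bounds out of the coefficients extracts one power of $n$ and produces a bound of order $O(Q(n,g_n))$, which is still one factor of $n$ short of the announced $O(Q(n,g_n)/n)$.

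The missing $1/n$ is to be obtained from the constraint $h_2\ge 1$ via Lemma~\ref{lem_ineq_genus}: for $h_2\ge 1$ and $n-n_2\ge n/2$, the lemma gives
\[
Q(n-n_2,g_n-h_2)\;\le\;\frac{1}{C_{\eps_\theta}^{2}(n-n_2)^{2}}\,Q(n-n_2,g_n-h_2+1),
\]
which for $n_2\le n^{1/3}$ is bounded by $(1+o(1))/(C_{\eps_\theta}^{2}n^{2})$ times $Q(n-n_2,g_n-h_2+1)$. This contributes the additional factor $1/n^{2}$ and the second $C_{\eps_\theta}^{-2}$; combined with the single $n$ already pulled out of $(2m_1+1)$, this yields the desired $1/n$. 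The tail $h_2\ge 2$ is controlled by iterating Lemma~\ref{lem_ineq_genus} further, each additional step contributing $1/(C_{\eps_\theta}^{2}n^{2})$, so that the tail is $o(Q(n,g_n)/n)$ and is absorbed into the $(1+o(1))$ in the statement.

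The main obstacle will be the careful bookkeeping of numerical constants: the target $16/C_{\eps_\theta}^{4}$ should arise from the two applications of Lemma~\ref{lem_BRL} and two of Lemma~\ref{lem_ineq_genus} (together giving $C_{\eps_\theta}^{-4}$), multiplied by a combinatorial constant $16$ coming from the leading $(n+3)$, the factor $3$ in front of the Carrell--Chapuy convolution, and the lower bounds on $(2m_1+1)$ and $(2m_2+1)$. Verifying that these factors combine to exactly $16$ (and checking that the extra $n^{2}$ appearing when one compares $Q(n+2,g_n+1)$ to $Q(n,g_n)$ through Proposition~\ref{prop_injection} is correctly absorbed by Lemma~\ref{lem_ineq_genus}) is the delicate part of the argument.
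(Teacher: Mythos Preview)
Your bookkeeping hides a genuine circularity. After you apply Lemma~\ref{lem_ineq_genus} to replace $Q(n_1,h_1)$ by $C_{\eps_\theta}^{-2}n_1^{-2}Q(n_1,h_1+1)$, the resulting convolution lives at total genus $g_n+1$, not $g_n$. Bounding it by Carrell--Chapuy now produces $Q(n+2,g_n+1)$ on the right, and to return to $Q(n,g_n)$ you must invoke Proposition~\ref{prop_injection}, which costs a factor $\Theta(n^2)$. This exactly cancels the $n^{-2}$ you just gained, so the argument stalls at $O(Q(n,g_n))$: you never reach $O(Q(n,g_n)/n)$. The parenthetical remark at the end that this $n^2$ is ``absorbed by Lemma~\ref{lem_ineq_genus}'' is double-counting the same $n^{-2}$.

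The missing idea is that the genus must be \emph{transferred}, not merely raised. For each fixed $(n_1,n_2,h_1,h_2)$ with $n_2\le n^{1/3}$, apply Proposition~\ref{prop_injection} on the \emph{small} side to get $Q(n_2,h_2)\le 4n_2^{3}\,Q(n_2,h_2-1)\le 4n\,Q(n_2,h_2-1)$, and simultaneously apply Lemma~\ref{lem_ineq_genus} on the large side to get $Q(n_1,h_1)\le 4C_{\eps_\theta}^{-2}n^{-2}\,Q(n_1,h_1+1)$. The product inequality
\[
Q(n_1,h_1)Q(n_2,h_2)\;\le\;\frac{16}{C_{\eps_\theta}^{2}\,n}\,Q(n_1,h_1+1)Q(n_2,h_2-1)
\]
preserves the total genus $g_n$. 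Iterating $h_2$ times drives $h_2$ down to $0$ and yields a geometric factor $(16/(C_{\eps_\theta}^{2}n))^{h_2}$, so the $h_2\ge 2$ tail is automatically $o(1/n)$. What remains is $\sum_{n_2\le n^{1/3}}Q(n-n_2,g_n)Q(n_2,0)$, and only \emph{now} do you feed this into the Carrell--Chapuy convolution at genus $g_n$ together with Lemma~\ref{lem_BRL}, picking up the second $C_{\eps_\theta}^{-2}$. The whole point of the restriction $n_2\le n^{1/3}$ is that Proposition~\ref{prop_injection} on the small factor costs only $n_2^{3}\le n$ rather than $n^{2}$; without exploiting this asymmetry the desired $1/n$ cannot be obtained.
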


\begin{proof}
Take $h_1,h_2$ such that $h_1+h_2=g_n$ and $h_1,h_2\geq 1$, and $n_1,n_2$ such that $n_1+n_2=n$ and $1\leq n_2\leq n^{1/3}$. 

Note that $\frac{h_1}{n_1}\leq \frac{g_n}{n-n^{1/3}}$, therefore if $n$ is large enough we have $\frac{h_1}{n_1}<\frac{1}{2}-\eps_\theta$. By Lemma~\ref{lem_ineq_genus}, we have $Q(n_1,h_1+1)\geq \Ce^2n_1^2Q(n_1,h_1)\geq \frac{\Ce^2}4 n^2Q(n_1,h_1) $.

By Proposition~\ref{prop_injection}, we have $Q(n_2,h_2-1)\geq \frac{1}{4n_2^3}Q(n_2,h_2)\geq \frac{1}{4n}Q(n_2,h_2)$. Therefore, by an immediate induction, we have 
\[Q(n_1,h_1)Q(n_2,h_2)\leq \left(\frac{16}{\Ce^2n}\right)^{h_2}Q(n_1,g_n)Q(n_2,0).\]
If we sum this inequality over all quadruplets $n_1,n_2,h_1,h_2$, then we obtain 
\[\sum_{\substack{h_1+h_2=g_n\\h_1,h_2\geq 1}} \sum_{\substack{n_1+n_2=n\\1\leq n_2\leq n^{1/3}}}Q(n_1,h_1)Q(n_2,h_2)\leq (1+o(1))\frac{16}{\Ce^2n} \sum_{\substack{n_1+n_2=n\\1\leq n_2\leq n^{1/3}}}Q(n_1,g_n)Q(n_2,0).\]
But
\begin{align*}
\sum_{\substack{n_1+n_2=n\\1\leq n_2\leq n^{1/3}}}Q(n_1,g_n)Q(n_2,0)&\leq \frac{1}{n+3} \sum_{\substack{n_1+n_2=n\\1\leq n_2\leq n^{1/3}}}(2n_1+1)(2n_2+1)Q(n_1,g_n)Q(n_2,0)\\&\leq Q(n+2,g_n)\\&\leq \frac{1}{\Ce^2}Q(n,g_n)
\end{align*} 
where the second inequality holds because of~\eqref{eq_CC}, and the last holds because of Lemma~\ref{lem_BRL}. Therefore, the proof is complete.
\end{proof}

Combining Lemmas~\ref{lem_product_two_big_parts} and~\ref{lem_n2_petit}, one obtains the following inequality (for $n$ large enough):
\begin{equation}
\label{eq_ineq_product}
\sum_{\substack{h_1+h_2=g_n\\h_1,h_2\geq 1}} \sum_{n_1+n_2=n\atop\\n_1,n_2\geq 1}Q(n_1,h_1)Q(n_2,h_2)\leq (1+o(1))\frac{2}{C_{\eps_\theta}^2n^{1/3}}Q(n,g_n).
\end{equation}

\subsection{Proof of Proposition~\ref{prop_cycle_tail}}

There is a bijection between maps with a marked cycle with tail and maps (or pairs of maps) with boundaries.

\begin{lem}\label{lem_couper_cycle_tail}
Let $\cQ_{ct}(n,g,\ell)$ be the set of maps of $\cQ(n,g)$ with a marked cycle with tail of size $\ell$ and $Q_{ct}(n,g,\ell)$ its cardinal. Then, for $n\geq 1, g\geq 1, \ell\geq 1$,
\[Q_{ct}(n,g,\ell)=\sum_{p+p'=\ell\atop p\geq p'\geq 1}(1+\mathbbm{1}_{p\neq p'})\left( Q^{(p,p')}(n,g-1)+\sum_{\substack{n_1+n_2=n\\n_1,n_2\geq 1}} \sum_{\substack{h_1+h_2=g\\h_1,h_2\geq 1}}Q^{(p)}(n_1,h_1)Q^{(p')}(n_2,h_2)\right)\]
\end{lem}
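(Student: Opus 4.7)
The identity is proved by an explicit combinatorial bijection obtained by cutting the map open along the marked cycle with tail. Given $q \in \cQ(n,g)$ with a marked cycle with tail $(P,C)$ of total size $\ell = |P|+|C|$, I would duplicate each edge of $P \cup C$ into its two sides, turning the lollipop $P \cup C$ into a slit in the surface. Since $P \cup C$ deformation-retracts to a circle, a regular neighborhood of it is an annulus, so cutting it out leaves exactly two boundary circles.

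Writing $c = |C|$ and $t = |P|$, I would then check, by counting edge-sides, that the two boundaries produced by the cut have lengths $c$ and $c + 2t$: the first comes from the side of the cycle opposite to the tail, while the second comes from the side of the cycle adjacent to the tail together with both sides of the tail. Their total length is $2\ell$, matching $2p+2p'$ with $p + p' = \ell$. Next I would split by topology: if $C$ is non-separating, the cut yields a single connected quadrangulation of genus $g-1$ with two distinguishable boundaries, i.e.\ an element of $\cQ^{(p,p')}(n, g-1)$; if $C$ is separating, it produces a pair of quadrangulations of genera $h_1, h_2 \geq 1$ (both positive since $C$ is non-contractible) summing to $g$, with face-counts $n_1+n_2 = n$, i.e.\ an element of $\cQ^{(p)}(n_1, h_1) \times \cQ^{(p')}(n_2, h_2)$.

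To complete the bijection I would assign roots to each boundary: the boundary of length $c+2t$ is naturally rooted using the root edge of $q$, which lies on the tail, while the boundary of length $c$ is rooted using the orientation of $C$ as an ordered sequence of vertices and edges. The symmetry factor $(1 + \mathbbm{1}_{p \neq p'})$ then accounts for the cycle orientation: when $t \ge 1$ (so $p > p'$), the cycle $C$ has two possible orientations starting at the attachment vertex $v = P \cap C$, producing two distinct marked cycles with tail per underlying subgraph and, symmetrically, two labelings of ``first''/``second'' boundary in the output; when $t = 0$ (so $p = p'$ and the cycle passes through the root), the root edge already fixes both the starting vertex and the direction of $C$, so there is only one marking per subgraph. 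Equivalently, $\sum_{p \geq p' \geq 1}(1+\mathbbm{1}_{p \neq p'})[\cdot]$ is exactly the ordered sum $\sum_{p,p' \geq 1}[\cdot]$, and the bijection produces one ordered pair per oriented marked cycle with tail.

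The main delicate step will be the local picture of the cut at $v = P \cap C$, where the two sides of the tail must be joined to the adjacent side of the cycle so as to form the single boundary of length $c + 2t$; one needs to verify that the corners at $v$ get redistributed correctly on each side of the cut. Once this is in place, the inverse operation, which identifies the two cycle-sides edge-by-edge and folds the tail slit back together, unambiguously recovers $q$ with its marked cycle with tail, completing the proof.
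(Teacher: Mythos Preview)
Your approach is essentially the paper's: slit the surface open along the lollipop $P\cup C$ to obtain (in the non-separating case) a single quadrangulation of genus $g-1$ with two boundaries of perimeters $|C|$ and $|C|+2|P|$, or (in the separating case) an ordered pair of quadrangulations with one boundary each and genera $h_1,h_2\ge 1$. The only discrepancy is in the bookkeeping of the factor $(1+\mathbbm{1}_{p\neq p'})$: the paper obtains it from the two possible \emph{orderings of the path} $P$ (recall that a simple path is by definition an ordered list, and the construction is run ``wlog'' with $P$ starting at the root vertex), not from orientations of $C$ --- your phrase ``two labelings of first/second boundary'' is not quite accurate as written, since the original root edge always lands on the longer boundary --- but numerically either convention gives the same doubling exactly when the tail is nonempty.
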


\begin{proof}
Let $(m,(\cP,\C))\in \cQ_{ct}(n,g,\ell)$ where $m\in \cQ(n,g)$ and $(\cP,\C)$ is a cycle with tail of $m$. If $\cP\neq \emptyset$, wlog, say that $\cP$ starts with the root vertex (this will explain the factor $1+\mathbbm{1}_{p\neq p'}$). Let $f$ be the face that lies on the left of the root of $m$. Let $p'=|\C|/2$. Now, cut along $\C$ (see Figure~\ref{fig_coupe_cycle}). There are two possible cases:

\textbf{Case 1:} $\C$ is non separating. Then, after cutting, one obtains a map with two marked simple faces of size $2p'$ that we will call $f_1$ and $f_2$, such that $f_1$ is incident to $\cP$, or adjacent to $f$ if $\cP=\emptyset$. Let $e^*$ be the unique edge incident to $f_1$ and $\cP$ (or to $f_1$ and the root edge if $\cP=\emptyset$) such that $f_1$ lies on the \textbf{left} of $e^*$. We call $e^*$ the gluing edge. Now, on $f_2$, let $e$ be the edge that was identified with $e^*$ before cutting along $\C$, note that $f_2$ lies on the right of $e$, hence we consider it as a root, and now $f_2$ is a boundary. We still need to deal with $f_1$ and $\cP$. If $\cP=\emptyset$, then $f_1$ lies on the right of the root edge, and can be considered as a boundary as well. Otherwise, by cutting along $\cP$ as in Figure~\ref{fig_coupe_path}, we obtain a boundary of size $2p$, with $p=p'+|\cP|$. Notice that the gluing edge $e^*$ is uniquely determined by the position of the root edge, hence we can forget about it.
Now we have a map of $\cQ^{(p,p')}(n,g-1)$ with $p+p'=\ell$ and $p\geq p'\geq 1$. The inverse operation consists in closing the path (such that the root edge gets identified with the other edge of the boundary that is incident to the root vertex) and gluing the two boundaries together by identifying the second root with the gluing edge.

%There are $p'$ ways of gluing back $f_1$ and $f_2$ together, we mark an edge $e$ such that $f_2$ lies on the right of $e$ to remember this. Hence, this edge $e$ can be considered as a root, and $f_2$ can be considered as a boundary of size $2p'$. 
%Now, if $\cP=\emptyset$, then $f_1$ lies on the right of the root edge, and can be considered as a boundary as well. Otherwise, by cutting along $\cP$ as in Figure~\ref{fig_coupe_path}, we obtain a boundary of size $2p$, with $p=p'+|\cP|$. Now we have a map of $\cQ^{(p,p')}(n,g-1)$ with $p+p'=\ell$ and $p\geq p'$. The inverse operation consists in closing the path and gluing the two boundaries together. 

\textbf{Case 2:} $\C$ is  separating. We perform the exact same operation: we cut along $\C$, then cut along $\cP$. We obtain a pair of maps of $\cQ^{(p)}(n_1,h_1)\times \cQ^{(p')}(n_2,h_2)$ with $p+p'=\ell$, $p\geq p'\geq 1$, $n_1+n_2=n$, $n_1,n_2\geq 1$,  $h_1+h_2=g$ and $h_1,h_2\geq 1$. The inverse operation consists in closing the path and gluing the two boundaries together.

\begin{figure}[!h]
\center
\includegraphics[scale=0.6]{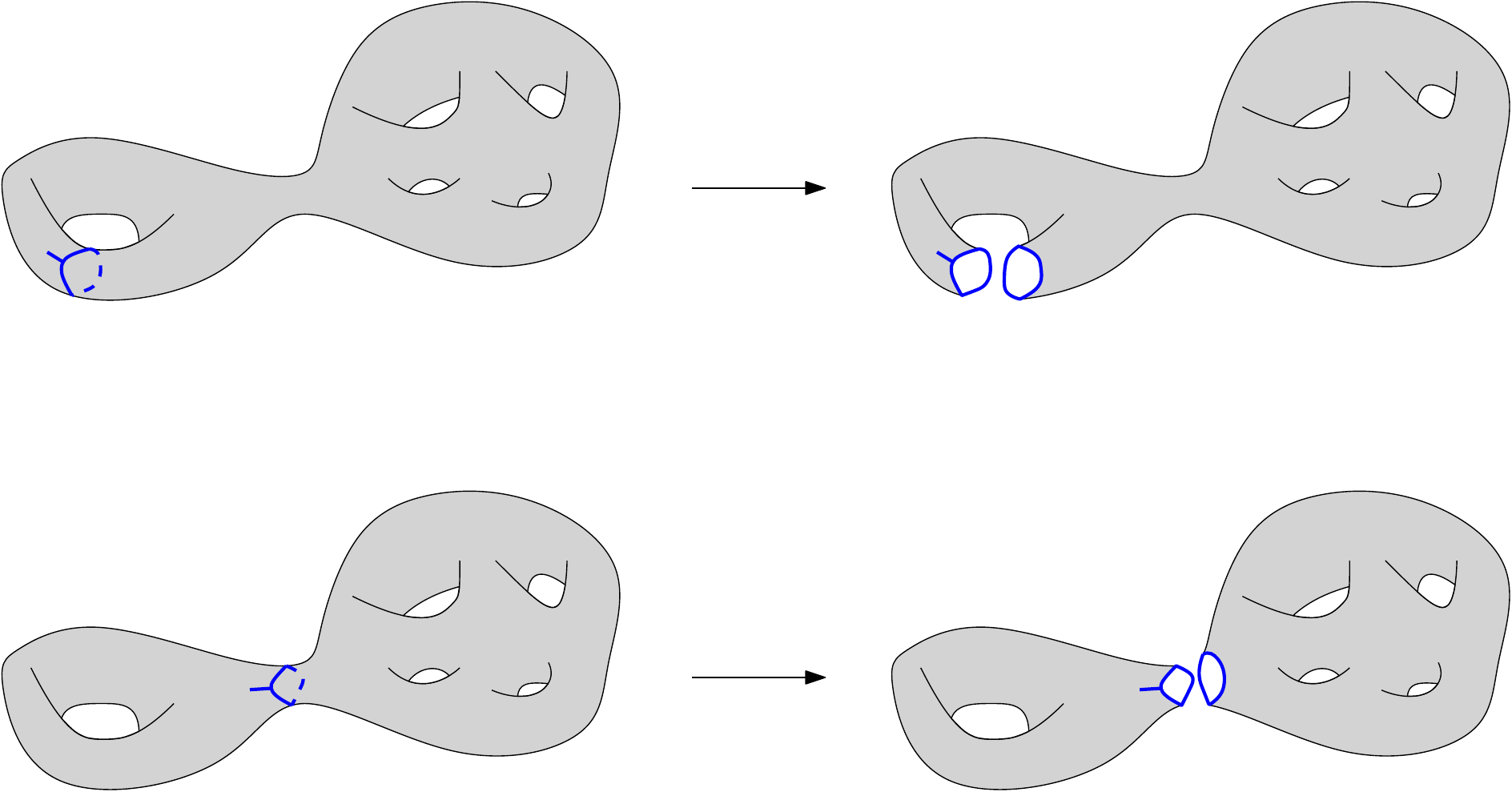}
\caption{Cutting along a non-contractible cycle, the two possible cases}\label{fig_coupe_cycle}
\end{figure}
\begin{figure}[!h]
\center
\includegraphics[scale=0.6]{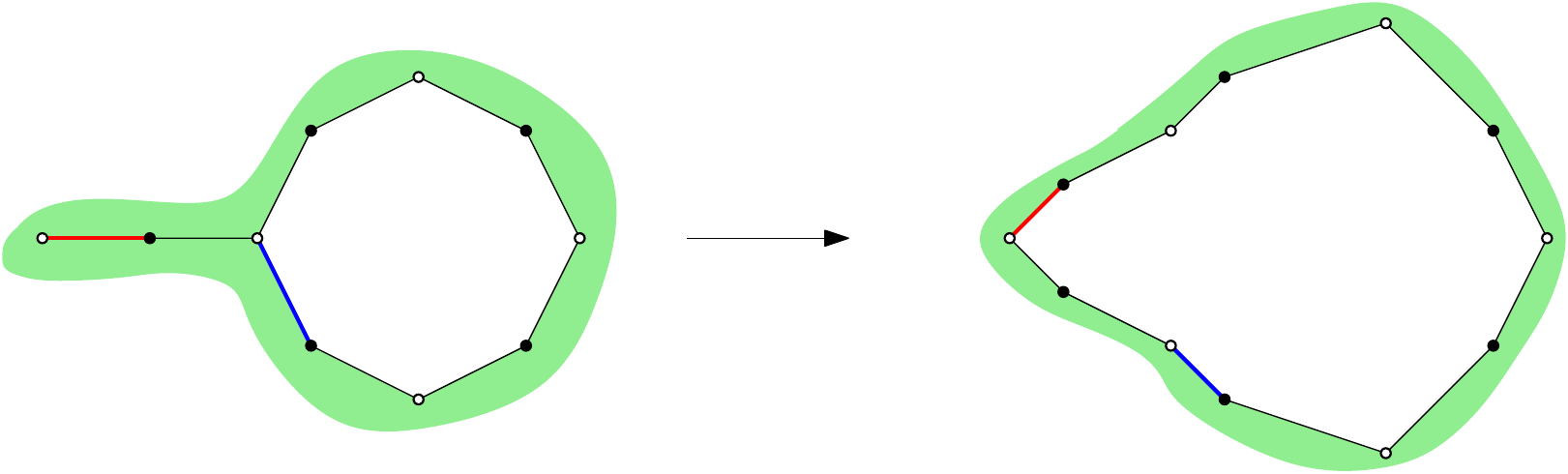}
\caption{Extending a boundary by cutting along a path. Here, $p'=4$ and $p=5$. The root is in red and the gluing edge is in blue.}\label{fig_coupe_path}

\end{figure}
\end{proof}

\begin{proof}[Proof of Proposition~\ref{prop_cycle_tail}]
We will use the first moment method. More precisely, set \[c_\theta=\frac{1}{6\log(1/\Ce)}.\] We will show that for all $\ell\leq c_\theta \log n$, we have $Q_{ct}(n,g_n,\ell)\leq \frac{Q(n,g_n)}{n^{1/7}} $. Then we conclude by a union bound on all $1\leq \ell\leq c_\theta \log n$.

By Lemma~\ref{lem_couper_cycle_tail}, we have (for $n$ large enough):
\begin{align*}
Q_{ct}(n,g_n,\ell)&=\sum_{p+p'=\ell\atop p\geq p'\geq 1}(1+\mathbbm{1}_{p\neq p'})\left( Q^{(p,p')}(n,g_n-1)+\sum_{\substack{n_1+n_2=n\\n_1,n_2\geq 1}} \sum_{\substack{h_1+h_2=g_n\\h_1,h_2\geq 1}} Q^{(p)}(n_1,h_1)Q^{(p')}(n_2,h_2)\right)\\
&\leq 2\ell \left(2(n+\ell-2)Q(n+\ell-2,g_n-1)+\sum_{\substack{n_1+n_2=n+\ell-2\\n_1,n_2\geq 1}} \sum_{\substack{h_1+h_2=g_n\\h_1,h_2\geq 1}}Q(n_1,h_1)Q(n_2,h_2)\right)\\
&\leq 2\ell \left(\frac{2}{\Ce^2 n}Q(n+\ell-2,g_n)+(1+o(1))\frac{2}{C_{\eps_\theta}^2n^{1/3}}Q(n+\ell-2,g_n)\right)\\
&\leq2 c_\theta \log n \left(\frac{1	}{\Ce}\right)^\ell \left(\frac{2}{n}Q(n,g_n)+\frac{2(1+o(1))}{n^{1/3}}Q(n,g_n)\right)\\
&\leq \frac{Q(n,g_n)}{n^{1/7}}
\end{align*}
where in the first inequality we used Lemma~\ref{lem_ineq_boundaries} and the fact that there are less than $\ell$ pairs $(p,p')$ such that $p+p'=\ell$ and $p\geq p'\geq 1$. In the second inequality, we used Lemma~\ref{lem_ineq_genus} for the first term and equation~\eqref{eq_ineq_product} for the second term. Finally, in the third inequality we used Lemma~\ref{lem_BRL}.

\end{proof}
The proof of Proposition~\ref{prop_cycle_tail} finishes the proof of Theorem~\ref{thm_planar_neighborhood}.

\section{Short non-contractible cycles}

Here we will prove Theorem~\ref{thm_short_cycle}, namely that non-separating cycles of length two appear with positive probability. The proof uses the same kind of tools that were used for Theorem~\ref{thm_planar_neighborhood}. This section is quite technical, but the general idea is simple: we estimate the number of maps with one or two marked non-separating cycles of length $2$, and apply the second moment method. Unfortunately, the possibility that the two marked cycles intersect creates a lot of pathological cases that we have to deal with separately, hence the need for many technical lemmas. We will make a heavy use of the Carrell--Chapuy formula~\eqref{eq_CC} as an inequality by forgetting certain terms and constants in order to simplify some expressions as soon as possible. For the sake of simplicity, we will use very rough bounds, hence we will not obtain an optimal value for $k_\theta$ in Theorem~\ref{thm_short_cycle} (we believe that even by being more careful, we cannot find the right value for $k_\theta$).

\subsection{Cutting a cycle of length two}

Here we introduce a bijective operation that we will use a lot in this section: cutting a cycle of length two. It consists in taking a marked cycle in a map, and cutting along it. Depending on whether the cycle was separating or not, we obtain one or two maps as a result, with two disjoint marked digons (one on each map if the original map is cut in two), that we contract into two marked edges. See Figure~\ref{fig_cutting_two} for an illustration. Note that in the separating case, one of the maps inherits the original root, and we root the other one by turning its marked edge into a root. In what follows, when we say we cut a cycle of length two, we mean we apply this precise operation.

\begin{figure}[!h]
\center
\includegraphics[scale=0.4]{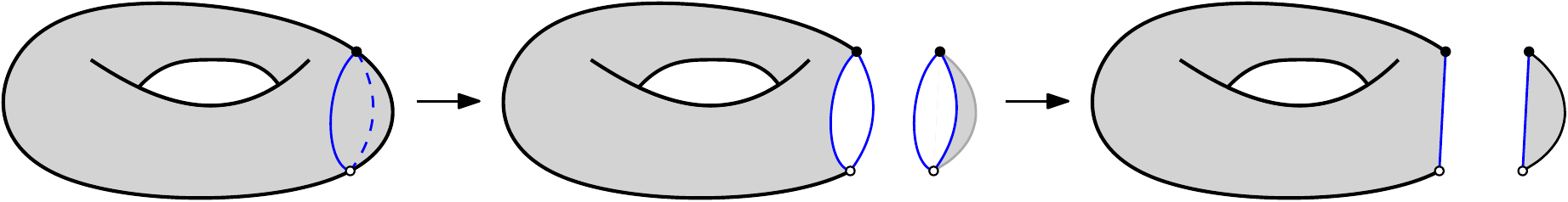}
\includegraphics[scale=0.4]{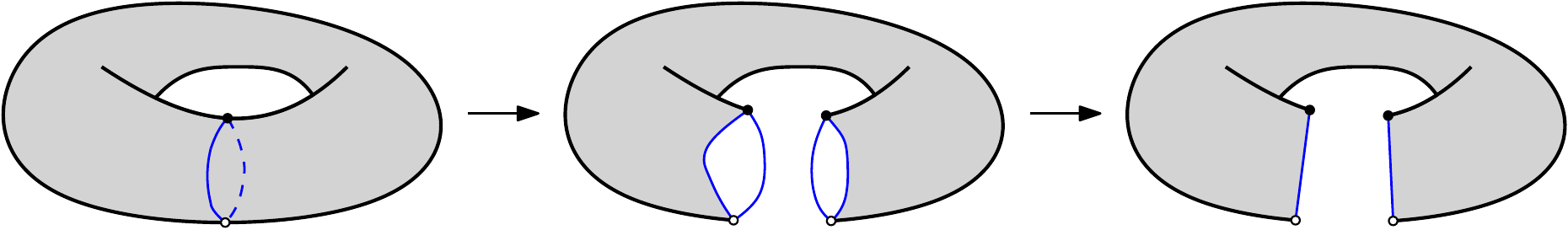}
\caption{Cutting a cycle of length two. Left: the separating case (note that it is possible that both maps are not planar), right: the non-separating case.}\label{fig_cutting_two}
\end{figure}

\subsection{Technical lemmas}
This subsection is devoted to some technical lemmas. The underlying idea is always the same: cutting a cycle and/or filling a face with quadrangles.

\begin{lem}\label{lem_counting_marked_path}
For any $\ell\geq 2$, the number of maps of $\cQ(n,g)$ with a marked path of size $\ell$ is less than $2(n+\ell-1)Q(n+\ell-1,g)$.
\end{lem}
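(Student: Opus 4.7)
The plan is to adapt the cut-and-tessellate injection of Lemma~\ref{lem_ineq_boundaries} to the setting of marked paths, together with a factor-two bookkeeping for the orientation of the path.

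First I would orient the marked path by designating one of its two endpoints as the starting vertex; this doubles the count, so it suffices to bound the number of (map, oriented marked path) pairs by $4(n+\ell-1)Q(n+\ell-1,g)$. Given such an oriented path $P=(v_0,e_1,\ldots,e_\ell,v_\ell)$ in $m\in\cQ(n,g)$, I would cut $m$ along $P$ by duplicating each edge $e_i$ into two parallel copies, splitting each interior vertex $v_i$ ($1\le i\le\ell-1$) into two vertices, and leaving $v_0,v_\ell$ intact. Simplicity of $P$ ensures that the result $m_1$ is a map of the same genus $g$ with $n$ quadrangular faces and one new simple $2\ell$-gonal face; the original root of $m$ survives, and if it happened to lie on $P$, the orientation of $P$ canonically designates one of the two copies as the root of $m_1$.

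Next I would tessellate the $2\ell$-gonal face of $m_1$ by $\ell-1$ quadrangles following the chain pattern of Figure~\ref{fig_tesselating}, yielding a map $m_2\in\cQ(n+\ell-1,g)$, and I would mark as a distinguished oriented edge the left copy of $e_1$, oriented from $v_0$ toward $v_1$. The key point is that this marked oriented edge, together with the canonical form of the tessellation chain, allows the operation to be inverted: the $\ell-1$ tessellating quadrangles form a chain anchored at the marked edge and can be removed to recover the $2\ell$-gonal boundary of $m_1$, after which the two sides of this boundary are glued back pairwise canonically to reconstruct $(m,P)$ with its orientation. Hence the construction is injective.

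It then remains to count the target. A map of $\cQ(n+\ell-1,g)$ has $2(n+\ell-1)$ edges, each with two orientations, so the number of (map in $\cQ(n+\ell-1,g)$, marked oriented edge) pairs is $4(n+\ell-1)Q(n+\ell-1,g)$. Dividing by $2$ to undo the initial orientation choice on $P$ gives the claimed bound $2(n+\ell-1)Q(n+\ell-1,g)$. The main obstacle I anticipate is making precise the "canonical form" of the tessellation chain so that the inversion is unambiguous, and checking that the degenerate configurations (root of $m$ lying on $P$, or the two sides of the boundary having a shared endpoint adjacent to the root) are handled by a fixed convention; these are bookkeeping issues entirely analogous to those already treated in the proof of Lemma~\ref{lem_ineq_boundaries}.
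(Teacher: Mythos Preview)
Your proposal is correct and follows essentially the same route as the paper: open the marked path into a simple $2\ell$-gon, tessellate it by $\ell-1$ quadrangles as in Lemma~\ref{lem_ineq_boundaries}, and record enough information via a marked edge to invert. The only difference is cosmetic: the paper marks a single (unoriented) edge on the opened face and uses the bipartite colouring to recover the gluing, so it lands directly in the set of size $2(n+\ell-1)Q(n+\ell-1,g)$, whereas you first orient the path, mark an oriented edge, and then divide by two --- the extra factor of $2$ you introduce and then cancel is unnecessary but harmless.
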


\begin{proof}
We give an injective proof: given such a map, open the path into a face of size $2\ell$ (see Figure~\ref{fig_path_open}), and mark an edge incident to this face to remember how to close it. For instance, say that this marked edge should be glued to the other edge incident to that face that shares a white vertex with it, this uniquely determines how to close the face into a path. Then, tessellate this face with $\ell-1$ quadrangles as in the proof of Lemma~\ref{lem_ineq_boundaries} to obtain a map of $\cQ(n+\ell-1)$ with a marked edge.
\end{proof}

\begin{figure}[!h]
\center
\includegraphics[scale=0.5]{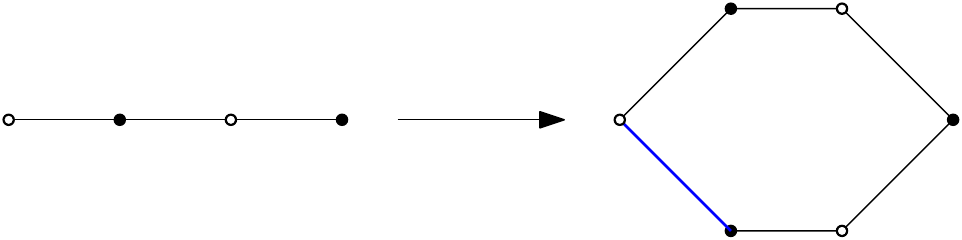}
\caption{Opening a path of length $\ell$ into a face of size $2\ell$. Here, $\ell=3$ and the marked edge is in blue.}\label{fig_path_open}
\end{figure}

\begin{lem}\label{lem_counting_marked_cycle}
For any $\ell\geq 1$, the number of maps of $\cQ(n,g)$ with a marked simple cycle of size $2\ell$ is less than
\[(n+2\ell+1)Q(n+2\ell,g).\]
\end{lem}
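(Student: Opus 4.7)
The plan is to proceed by the same combinatorial strategy as in Proposition~\ref{prop_cycle_tail}: cut the map along the marked cycle, turn the resulting pieces into boundary-free quadrangulations using the tessellation of Lemma~\ref{lem_ineq_boundaries}, and finally bound the resulting count by $Q(n+2\ell,g)$ via one application of the Carrell--Chapuy recursion~\eqref{eq_CC}.

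First, I would apply the cutting operation of Subsection~6.1 to the marked cycle of size $2\ell$, but keep the two $2\ell$-gon faces that appear after cutting (instead of contracting them as we did for the length-$2$ case). Exactly as in the "no tail" specialization of Lemma~\ref{lem_couper_cycle_tail}, this produces either
\begin{itemize}
\item a single map in $\cQ^{(\ell,\ell)}(n,g-1)$, when the cycle is non-separating, or
\item an ordered pair of maps in $\cQ^{(\ell)}(n_1,g_1)\times \cQ^{(\ell)}(n_2,g_2)$ with $n_1+n_2=n$ and $g_1+g_2=g$, when it is separating.
\end{itemize}

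Next, I would apply Lemma~\ref{lem_ineq_boundaries} to tessellate away every boundary. In the non-separating case this kills both $2\ell$-gons at the cost of a factor $2(n+2\ell-2)$ to mark what was the second root, giving a bound of the form $2(n+2\ell-2)\,Q(n+2\ell-2,g-1)$. In the separating case it kills the single $2\ell$-gon of each piece, giving a bound of the form $\sum Q(n_1+\ell-1,g_1)\,Q(n_2+\ell-1,g_2)$ with no further factor beyond the already-ordered pair.

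Finally, I would read both bounds off the Carrell--Chapuy formula~\eqref{eq_CC} applied at $n+2\ell$. The term $(2(n+2\ell)-2)(n+2\ell-1)(2(n+2\ell)-1)\,Q(n+2\ell-2,g-1)$ of~\eqref{eq_CC} absorbs the non-separating contribution: after dividing, one gets a bound of the form $\frac{n+2\ell+1}{3}Q(n+2\ell,g)$ since the denominator is much larger than $2(n+2\ell-2)$. The convolution term $3\sum(2m_1+1)Q(m_1,h_1)(2m_2+1)Q(m_2,h_2)$ absorbs the separating contribution: after the substitution $m_i=n_i+\ell-1$, the weights $(2m_i+1)\geq 1$ can be dropped, and the overall bound $\frac{n+2\ell+1}{3}Q(n+2\ell,g)$ follows. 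Adding the two contributions yields at most $(n+2\ell+1)Q(n+2\ell,g)$, which is the desired inequality.

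The main obstacle is not conceptual but rather bookkeeping: one must check that the constants and the weights $(2m_i+1)$ appearing in~\eqref{eq_CC} are large enough to swallow the linear factor $2(n+2\ell-2)$ produced by the second root marking in the non-separating case, and that the separating case does not overcount (in particular, the ordered/unordered pair convention must match the one hidden in the convolution). This is essentially the same routine accounting as in the proof of Proposition~\ref{prop_cycle_tail}.
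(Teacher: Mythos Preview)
Your strategy is exactly the one the paper uses: cut along the marked cycle, tessellate the resulting $2\ell$-gons into quadrangles, and absorb both the non-separating and separating contributions into the right-hand side of~\eqref{eq_CC} evaluated at $n+2\ell$. The paper does not route through $\cQ^{(\ell)}$ and Lemma~\ref{lem_ineq_boundaries} explicitly; instead it tessellates the two marked $2\ell$-gons directly and arrives at the bound
\[
2(n+2\ell-2)\,Q(n+2\ell-2,g-1)\;+\!\!\sum_{\substack{m_1+m_2=n+2\ell-2\\ g_1+g_2=g}}\!\!2m_1\,Q(m_1,g_1)\,Q(m_2,g_2),
\]
which is then dominated term by term by the $Q(N-2,g-1)$ and convolution pieces of~\eqref{eq_CC} at $N=n+2\ell$.

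There is one genuine slip in your bookkeeping that you should fix. In Lemma~\ref{lem_couper_cycle_tail} the root edge lies on the cycle (or on the attached tail), so after cutting the root sits on a boundary and the pieces are honest elements of $\cQ^{(p)}$ or $\cQ^{(p,p')}$. Here, by contrast, the marked simple cycle is arbitrary and need not contain the root. Hence the piece carrying the original root is \emph{not} an element of $\cQ^{(\ell)}$ in the sense of Section~\ref{sec_def}, and Lemma~\ref{lem_ineq_boundaries} does not apply to it directly. Concretely, your separating bound $\sum Q(n_1+\ell-1,g_1)\,Q(n_2+\ell-1,g_2)$ is missing a linear factor (for the position of the original root, equivalently for a marked edge on one boundary); this is the $2m_1$ appearing in the paper's expression above. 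The fix is immediate: tessellate the boundary of the rooted piece directly (as in the proof of Lemma~\ref{lem_ineq_boundaries}) and keep one extra marked edge; the convolution term of~\eqref{eq_CC} still swallows $2m_1$ since $2m_1\le 3(2m_1+1)(2m_2+1)$. With this correction your argument and the paper's coincide.
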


\begin{proof}
The proof uses a similar injective operation as in the proof of Lemma~\ref{lem_couper_cycle_tail}, except that now the cycle we consider might be contractible.
Given such a map, cut along this cycle. There are two cases: either it is separating, or not.
If the cycle is not separating, one obtains one map of genus $g-1$ with $n$ quadrangles plus two marked faces of length $2\ell$ with a marked edge on each (to go back, glue the two faces together so that the marked edges coincide). If the cycle is separating, one obtains two maps with $n$ quadrangles and genus $g$ in total, with one marked face on each map, and a marked edge on each face.

Now, as in the proof of Lemma~\ref{lem_ineq_boundaries}, if $\ell=1$, we have marked digons that we close into marked edges. Otherwise, tessellate these marked faces with $\ell-1$ quadrangles each. Note that if we obtain two maps, one of them inherits the original root, and in the other we turn the marked edge into a root. Hence, we have an injective operation into a set of size
\[2(n+2\ell-2)Q(n+2\ell-2,g-1)+\sum_{n_1+n_2=n+2\ell-2\atop\\n_1,n_2\geq 0}\sum_{g_1+g_2=g\atop\\g_1,g_2\geq 0}2n_1Q(n_1,g_1)Q(n_2,g_2).\]
Using~\eqref{eq_CC} we can bound the quantity above by $(n+2\ell+1)Q(n+2\ell,g)$.
\end{proof}

\begin{lem}\label{lem_counting_marked_two cycles}
The number of maps of $\cQ(n,g)$ with two vertex-disjoint marked cycles of length $2$ is less than
\[2n(n+5)Q(n+4,g).\]
\end{lem}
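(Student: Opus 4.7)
The plan is to mimic the injective strategy of Lemma~\ref{lem_counting_marked_cycle}, but applied to one cycle at a time. Given a map $m\in\cQ(n,g)$ with two vertex-disjoint marked cycles $C_1, C_2$ of length~$2$, I would first cut along $C_1$ using the operation described at the start of this section (cutting along $C_1$ and contracting each of the two resulting digons into a marked edge). Crucially, because $C_1$ and $C_2$ are vertex-disjoint, the cycle $C_2$ survives unchanged as a marked length-$2$ cycle in the resulting structure, so I can then apply Lemma~\ref{lem_counting_marked_cycle} to it.

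Two sub-cases arise from this first cut. Case (a), $C_1$ non-separating: the result is a single map in $\cQ(n,g-1)$ with two marked edges (one per digon) together with a marked length-$2$ cycle. Upper-bounding by first counting maps of $\cQ(n,g-1)$ with a marked length-$2$ cycle via Lemma~\ref{lem_counting_marked_cycle} (giving at most $(n+3)Q(n+2,g-1)$) and then multiplying by $(2n)^2$ for the ordered choice of two marked edges, this case contributes at most $4n^2(n+3)Q(n+2,g-1)$. Case (b), $C_1$ separating: the result is a pair of maps in $\cQ(n_1,g_1)\times\cQ(n_2,g_2)$ with $n_1+n_2=n$, $g_1+g_2=g$, each carrying one marked edge, with $C_2$ in one of them. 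Summing over which piece contains $C_2$ and applying Lemma~\ref{lem_counting_marked_cycle} to that piece, this case contributes at most
\[\sum_{\substack{n_1+n_2=n\\g_1+g_2=g}} 4n_1n_2\bigl[(n_1+3)Q(n_1+2,g_1)Q(n_2,g_2)+(n_2+3)Q(n_1,g_1)Q(n_2+2,g_2)\bigr].\]

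To conclude, I would apply the Carrell--Chapuy formula~\eqref{eq_CC} to $Q(n+4,g)$ in the same ``drop positive terms'' spirit as in Lemma~\ref{lem_counting_marked_cycle}. Keeping only the handle-creation term gives $(n+5)Q(n+4,g)\geq(2n+6)(n+3)(2n+7)Q(n+2,g-1)$, which bounds case~(a) by $(n+5)Q(n+4,g)$. Keeping only the convolution term gives $(n+5)Q(n+4,g)\geq 3\sum_{m_1+m_2=n+2,\,g_1+g_2=g}(2m_1+1)(2m_2+1)Q(m_1,g_1)Q(m_2,g_2)$; setting $m_1=n_1+2$, $m_2=n_2$ and using the crude inequality $4n_1n_2(n_1+3)\leq(n+3)(2m_1+1)(2m_2+1)$ bounds case~(b) by $\tfrac{2(n+3)(n+5)}{3}Q(n+4,g)$. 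Summing the two contributions yields at most $(n+5)\cdot\tfrac{2n+9}{3}Q(n+4,g)$, which is $\leq 2n(n+5)Q(n+4,g)$ for $n\geq 3$ (small $n$ being handled directly since the left-hand side is easily bounded). The main technical obstacle is the careful bookkeeping of constants through these Carrell--Chapuy applications; the rough bounds are justified by the paper's philosophy of not optimizing constants.
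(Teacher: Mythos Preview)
Your approach is essentially the same as the paper's: cut the first cycle, observe that vertex-disjointness guarantees the second cycle survives as a genuine length-$2$ cycle in the output, apply Lemma~\ref{lem_counting_marked_cycle} to that surviving cycle, and then absorb everything into $Q(n+4,g)$ via the Carrell--Chapuy formula. The paper writes the intermediate count as
\[
n(2n-1)Q_c(n,g-1)+\sum_{n_1+n_2=n}\sum_{g_1+g_2=g}2n_1\bigl(Q_c(n_1,g_1)Q(n_2,g_2)+Q(n_1,g_1)Q_c(n_2,g_2)\bigr),
\]
with $Q_c$ the count of maps with one marked length-$2$ cycle, and then bounds $Q_c$ by Lemma~\ref{lem_counting_marked_cycle} and the whole expression by $2n(n+5)Q(n+4,g)$ in a single Carrell--Chapuy step.

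The only discrepancy is in your separating case: after the paper's cutting operation, only one of the two pieces carries a genuine marked edge, while the other piece's marked edge \emph{becomes} its root, so the correct prefactor is $2n_1$, not $4n_1n_2$. Your description (``each carrying one marked edge'') is thus slightly off, but since you are only after an upper bound and $4n_1n_2\ge 2n_1$, the argument remains valid; your intermediate bounds are simply looser than necessary. Your explicit splitting of the Carrell--Chapuy application into the handle term (for case~(a)) and the convolution term (for case~(b)) is also a harmless variant of what the paper does in one stroke after factoring out $2n$.
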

\begin{proof}
Let us write $Q_c(n,g)$ for the number of maps of $\cQ(n,g)$ with \textbf{one} marked cycle of length two. 
Take a map $\cQ(n,g)$ with two vertex-disjoint marked cycles of length $2$, and perform the cutting operation on one of them. Since the two cycles were vertex-disjoint, the second cycle remains a cycle after cutting the first cycle. This is a bijective operation, that puts the set of maps $\cQ(n,g)$ with two disjoint marked cycles of length $2$ in bijection with a set of cardinality
\begin{equation}\label{eq_abc}
n(2n-1)Q_c(n,g-1)+\sum_{n_1+n_2=n\atop\\n_1,n_2\geq 0}\sum_{g_1+g_2=g\atop\\g_1,g_2\geq 0}2n_1(Q_c(n_1,g_1)Q(n_2,g_2)+Q(n_1,g_1)Q_c(n_2,g_2))
\end{equation}
(the argument is exactly the same as in Lemma~\ref{lem_counting_marked_cycle}, except that in case we cut a separating cycle, there are two possibilities, according to where the root and the second cycle are).

Now, using Lemma~\ref{lem_counting_marked_cycle} with $\ell=1$, we upper bound~\eqref{eq_abc} by
\[ 2n(2n-1)(n+3)Q(n+2,g-1)+2n\sum_{n_1+n_2=n\atop\\n_1,n_2\geq 0}\sum_{g_1+g_2=g\atop\\g_1,g_2\geq 0}(n_1+3)Q(n_1+2,g_1)Q(n_2,g_2)+(n_2+3)Q(n_1,g_1)Q(n_2+2,g_2),\]
which, by~\eqref{eq_CC}, is less than
\[2n(n+5)Q(n+4,g)\]
which finishes the proof.
\end{proof}

\subsection{Proof of Theorem~\ref{thm_short_cycle}}

We can now enumerate bipartite quadrangulations with marked non-separating cycles of length $2$.
For $k=1,2$, let $\cQ_{ns}^{(k)}(n,g)$ be the set of bipartite quadrangulations of size $n$, genus $g$ and $k$ marked distinct non separating cycles of length $2$, and $Q_{ns}^{(k)}(n,g)$ be its cardinal.
The cutting operation applied to a non-separating cycle immediately implies
\begin{equation}\label{eq_bij_nonsep}
Q_{ns}^{(1)}(n,g)=n(2n-1)Q(n,g-1).
\end{equation}

Enumerating $Q_{ns}^{(2)}(n,g)$ is a little trickier because the two cycles might intersect. We only give an upper bound.
\begin{lem}\label{lem_bij_nonsep}

We have the following inequality
\begin{align}\label{eq_ineq_two_cycles}
Q_{ns}^{(2)}(n,g)\leq & n(n-1)(2n-1)(2n-3)Q(n,g-2)+2(n+3)Q(n+3,g-1)\notag\\&+2(n+1)(n+5)Q(n+4,g-1)+(4n^2+15n+7)Q(n+2,g-1).
\end{align}
\end{lem}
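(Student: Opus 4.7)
The plan is to split $\cQ_{ns}^{(2)}(n,g)$ into strata according to the combinatorial intersection pattern of the two marked non-separating cycles $C_1, C_2$ of length $2$, and to perform a (possibly modified) cutting operation in each stratum. Since each $C_i$ has two vertices and two edges, and since $C_1\neq C_2$, the four possible cases are (a) $V(C_1)\cap V(C_2)=\emptyset$ (vertex-disjoint); (b) $|V(C_1)\cap V(C_2)|=1$ (sharing exactly one vertex); (c) $V(C_1)=V(C_2)$ with $E(C_1)\cap E(C_2)=\emptyset$ (same endpoints, disjoint edges, so multi-edge multiplicity at least $4$); and (d) $V(C_1)=V(C_2)$ with $|E(C_1)\cap E(C_2)|=1$ (same endpoints, one shared edge, multi-edge multiplicity at least $3$).

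In case (a), I apply the cutting operation of Section~5.1 to $C_1$ and then to $C_2$. Vertex-disjointness guarantees that $C_2$ remains a cycle of length $2$ after cutting $C_1$ (although its separating/non-separating status in the cut map may change). If both successive cuts are non-separating, the end result is a map of $\cQ(n,g-2)$ carrying two unordered marked pairs of edges, and bounding the number of such configurations crudely by counting all choices of two unordered pairs of edges gives
\[
\binom{2n}{2}\binom{2n-2}{2}\, Q(n,g-2) \;=\; n(n-1)(2n-1)(2n-3)\, Q(n,g-2),
\]
which is exactly the first term of the stated RHS. If instead the second cut is separating in the intermediate map of genus $g-1$, the result is a pair of connected components with total $n$ faces and total genus $g-1$; this sub-case is handled as in the proof of Lemma~\ref{lem_counting_marked_two cycles} using \eqref{eq_CC} and gets absorbed into the $Q(n+k,g-1)$-type terms of the RHS.

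In cases (b), (c), (d) the two cycles share vertices, so one cannot cut them independently. The strategy, inspired by the tessellation technique of Lemmas~\ref{lem_counting_marked_path} and~\ref{lem_counting_marked_cycle}, is to cut along $C_1$ and then view the image of $C_2$ (together with the two marked edges produced by the cut) in the cut map as a small marked subgraph: its total edge count is $|E(C_1)\cup E(C_2)|=4$ in cases (b) and (c), and $3$ in case (d). Opening this subgraph into a marked face and tessellating it by quadrangles lands me in a map of $\cQ(n+k,g-1)$ for a small value of $k\in\{2,3,4\}$ determined by the case, which produces the three remaining terms $2(n+3)Q(n+3,g-1)$, $2(n+1)(n+5)Q(n+4,g-1)$, and $(4n^2+15n+7)Q(n+2,g-1)$. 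The polynomial prefactors come from the combinatorial choices needed to encode the cyclic order of the edges around the shared vertices, the rootings, and the positions of the marked edges inside the tessellated face.

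The main obstacle is the detailed bookkeeping in the intersecting cases (b), (c), (d): one has to carefully enumerate the possible cyclic arrangements of the edges of $C_1\cup C_2$ around the shared vertices, track the position of the root, and, for the disconnected sub-case of (a), invoke \eqref{eq_CC} as an inequality (forgetting certain terms and constants) to collapse sums $\sum Q(n_1,g_1)Q(n_2,g_2)$ into clean $Q(n+k,g-1)$-type expressions. Following the conventions announced at the start of the section, I will be deliberately loose with constants so as to keep the expressions manageable; this is consistent with the author's warning that the optimal value of $k_\theta$ in Theorem~\ref{thm_short_cycle} cannot be recovered by this method.
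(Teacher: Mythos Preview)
Your plan is essentially the paper's: stratify by the intersection pattern of $C_1$ and $C_2$, cut along $C_1$, and bound each stratum either by counting marked edges in lower genus (giving the $Q(n,g-2)$ term) or by opening/tessellating the residual subgraph and invoking~\eqref{eq_CC} (giving the $Q(n+k,g-1)$ terms). The paper organises the cases by the behaviour of $c_2$ \emph{after} cutting $c_1$ rather than by the a~priori intersection pattern, but this is largely cosmetic; in particular the paper's Case~1 (``$c_2$ remains a cycle'') is not restricted to your vertex-disjoint case~(a), since even when the cycles share vertices the two edges of $c_2$ may land on the same side of the cut.

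There is, however, one genuine gap in your sketch. In your case~(c) (same two endpoints, edge-disjoint), the four-edge subgraph you obtain after cutting $C_1$ is \emph{not} always connected, so it cannot always be ``opened into a marked face and tessellated''. Depending on the cyclic order of the four edges around the two shared vertices, cutting $C_1$ may send the two edges of $C_2$ to \emph{different} copies of each duplicated vertex, yielding two vertex-disjoint $2$-cycles in the cut map of genus $g-1$. This is the paper's Case~2b, and it is handled not by tessellation but by Lemma~\ref{lem_counting_marked_two cycles} (maps with two disjoint marked $2$-cycles), which contributes the dominant piece $2n(n+5)Q(n+4,g-1)$ of the term $2(n+1)(n+5)Q(n+4,g-1)$. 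A secondary difference: in your case~(d) the paper does not cut $C_1$ but rather the auxiliary $2$-cycle $(e_1,e_2)$ formed by the two non-shared edges, splitting further according to whether that cycle is separating or not; this is what produces the several sub-contributions adding up to the coefficient $4n^2+15n+7$ in front of $Q(n+2,g-1)$.
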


\begin{proof}

We need to do a careful analysis of all the cases involved when enumerating $Q_{ns}^{(2)}(n,g)$. We only have an inequality because of some pathological cases in which the two cycles intersect. We will perform the same kind of operation, i.e. cutting a cycle into two marked digons, but we need to be cautious, because after cutting the first cycle, the second cycle might not be well defined anymore. All our operations will be injective.

Let us start with a map $m$ of $\cQ_{ns}^{(2)}(n,g)$, we call $c_1$ and $c_2$ its two marked non-separating cycles of length $2$. There are three cases: after we cut $c_1$, $c_2$ might be a cycle, or not. To these two cases, we need to add the "ambiguous" case where $c_1$ and $c_2$ share an edge, which we treat separately. Each of these cases contributes to~\eqref{eq_ineq_two_cycles}. More precisely, the individual contributions can be found in \eqref{eq1}, \eqref{eq2}, \eqref{eq3}, \eqref{eq4}, \eqref{eq5}, \eqref{eq6} and \eqref{eq7}.

\paragraph{Case 1: $c_2$ remains a cycle.}
Let us start with the case where $c_2$ remains a cycle. Recall that we force $c_1$ and $c_2$ to be edge-disjoint, but they do not have to be vertex-disjoint. Let $m'$ be the map obtained after cutting $c_1$. Since $c_1$ and $c_2$ were edge-disjoint in $m$, $c_2$ is disjoint from the two marked edges. Now, we can cut $c_2$. However, in $m'$, it is possible that $c_2$ became a separating cycle, and maybe even contractible (see Figure~\ref{fig_three_cases}).

\begin{figure}[!h]
\center
\includegraphics[scale=0.7]{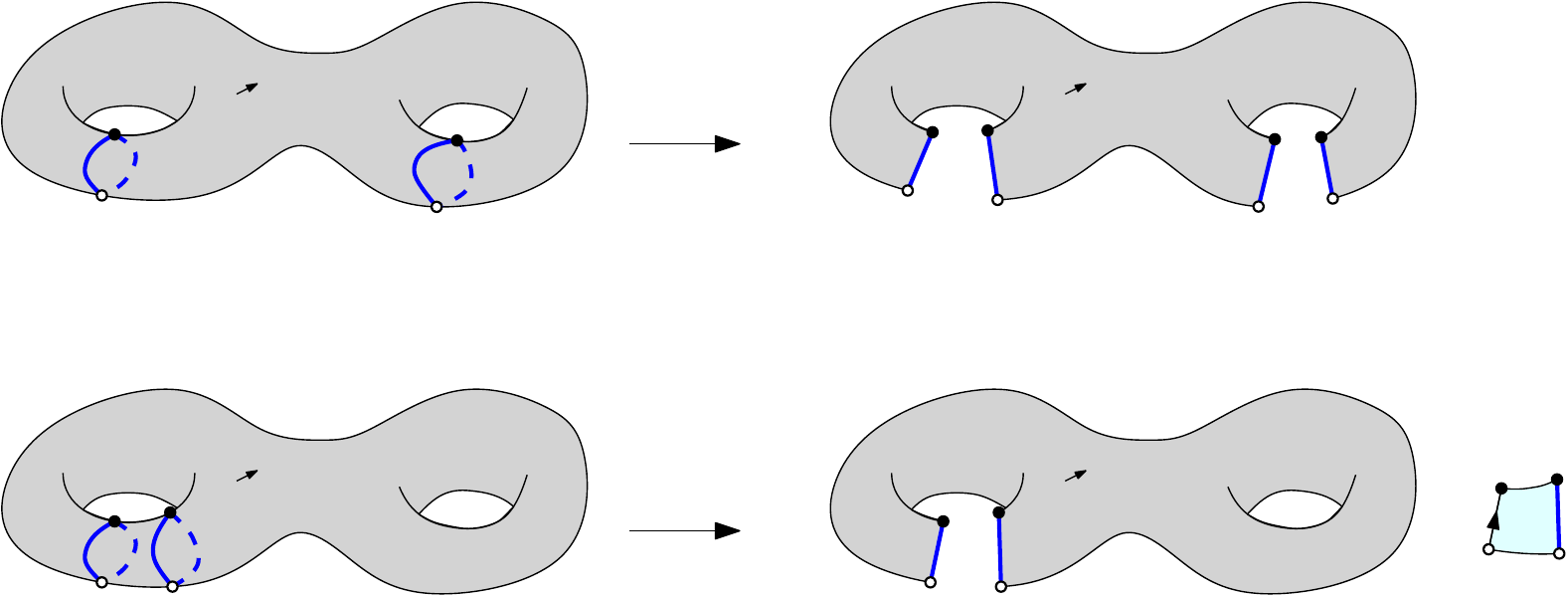}
\caption{Cutting two short cycles. Above: when the second cycle stays non-separating, below: when it becomes separating (note that the cyan map doesn't have to be planar).}\label{fig_three_cases}
\end{figure}
If $c_2$ is non-separating in $m'$, we obtain a map of genus $g-2$ with $n$ quadrangles and two distinct unordered pairs of marked edges, which contributes to exactly 
\begin{equation}\label{eq1}
n(n-1)(2n-1)(2n-3)Q(n,g-2)
\end{equation}
in~\eqref{eq_ineq_two_cycles}. 
 
If $c_2$ was separating, then we obtain two maps of total genus $g-1$ and total number of quadrangles $n$, with two distinguishable marked edges on each maps (they are distinguishable because on each map, one of these edges comes from $c_1$ and the other comes from $c_2$). One of these two maps contains the root of $m$, in the second one, we turn one of its marked edges into a root. Therefore, the contribution of this case in~\eqref{eq_ineq_two_cycles} is 
\[\sum_{n_1+n_2=n\atop n_1,n_2\geq 1} \sum_{g_1+g_2=g-1\atop g_1,g_2\geq 0}2n_1(2n_1-1)Q(n_1,g_2)2n_2Q(n_2,g_2)\]
If we upper bound $n_1$ by $n$ in the sum above, by~\eqref{eq_CC}, we obtain that this contribution is less than
\begin{equation}\label{eq2}
2n(n+3)Q(n+2,g-1).
\end{equation}

% If the two cycles do not share an edge, it is possible to cut along the first cycle, then cut along the second, to obtain two maps, one planar, one of genus $g-1$, with two distinct non adjacent marked digons each (see Figure XXX). Contract these two digons into distinct marked edges. One of the maps contains the original root, and has thus two marked edges, the other has no root, turn one of its marked edges into a root. This explains the term 
%\[\sum_{n_1+n_2=n\atop n_1,n_2\geq 1} 2n_1Q(n_1,0)(2n_2-1)2n_2Q(n_2,g-1)+\sum_{n_1+n_2=n\atop n_1,n_2\geq 1}(2n_1-1)2n_1Q(n_1,0)2n_2Q(n_2,g-1)\]
%in XXX. 
%The remaining case is if the two cycles share an edge. Let us call $(e_1,e^*)$ and $(e_2,e^*)$ the two marked homotopic non-separating cycles. Then, $(e_1,e_2)$ is a \textbf{contractible} cycle of length two. If we cut along it and contract the digons we obtain, we obtain two maps one of genus $g$ with a marked non separating cycle of length $2$ and a marked edge on that cycle, one planar with a marked edge. We can now cut the non-separating cycle of the first map and obtain a map of genus $g-1$ with two marked edges. Again, we need to remember which of the two maps inherits the original root, and, in the other map, turn of of the marked edges into a root. Hence, the contribution of this case in XXX is
%\[\sum_{n_1+n_2=n\atop n_1,n_2\geq 1} Q(n_1,0)(2n_2-1)2n_2Q(n_2,g-1)+\sum_{n_1+n_2=n\atop n_1,n_2\geq 1}2n_1Q(n_1,0)2n_2Q(n_2,g-1).\]

\paragraph{Case 2a: $c_2$ is not a cycle anymore, but stays connected.}

In this case, $c_1$ and $c_2$ were not vertex-disjoint. When we cut $c_1$, we either obtain a map of $\cQ(n,g-1)$ with a marked path of length $4$ (if $c_1$ and $c_2$ shared only vertex, and the extremities of the path are of the color of the vertex $c_1$ and $c_2$ did not share) or a marked simple cycle of length $4$ (if $c_1$ and $c_2$ shared two vertices). See Figure~\ref{fig_coupe_cycle_ou_path} for an illustration.
By Lemma~\ref{lem_counting_marked_path} (for $\ell=4$) and Lemma~\ref{lem_counting_marked_cycle} (for $\ell=2$), this contributes to at most 
\begin{equation}\label{eq3}
2(n+3)Q(n+3,g-1)+2(n+5)Q(n+4,g-1)
\end{equation}
in \eqref{eq_ineq_two_cycles}.
The factor $2$ in the second term comes from the fact that,  in the cycle of length $4$, we need to remember which of the edges belonged to $c_1$ and which belonged to $c_2$ (two possibilities).

\begin{figure}[!h]
\center
\includegraphics[scale=0.5]{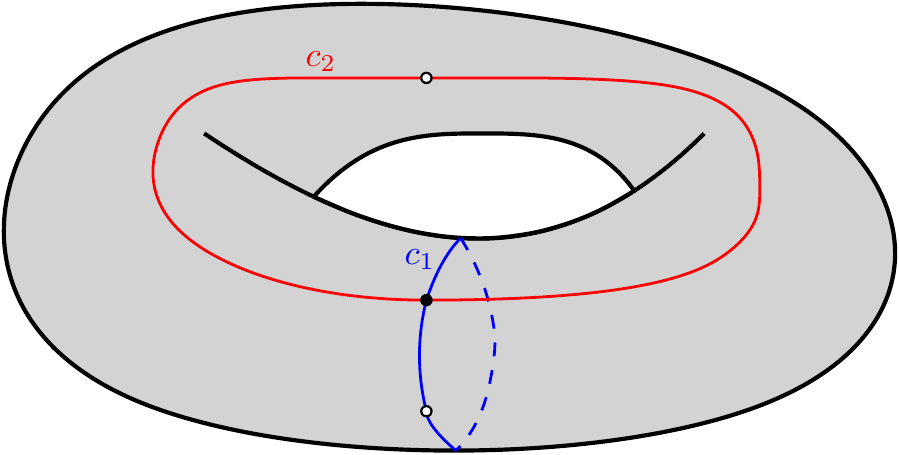}
\includegraphics[scale=0.5]{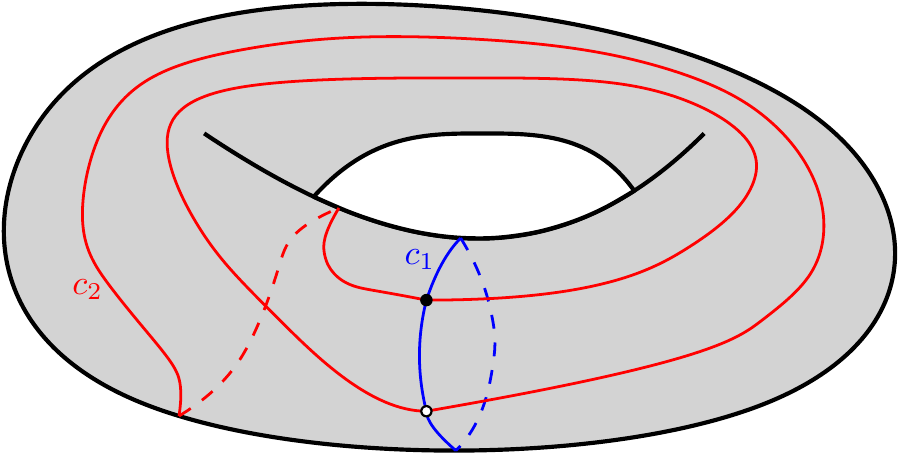}
\caption{The case where $c_2$ is not a cycle anymore but remains connected when $c_1$ is cut. Left: when $c_1$ and $c_2$ share one vertex. Right: when they share two vertices.}\label{fig_coupe_cycle_ou_path}
\end{figure}

%If the two marked cycles are disjoint, it is well defined to cut one cycle then the other. We obtain a map of genus $g-2$ with two distinct unordered pairs of marked digons that we contract into two distinct unordered pairs of marked edges. This contributes to $\frac{2n(2n-1)(2n-2)(2n-3)}{4}Q(n,g-2)$ in XXX. 
%
%Now, suppose our two marked cycles $c$ and $(e_1,e_2)$ intersect at exactly one vertex. Cut $c$ into two digons $d_1$ and $d_2$. We claim that $(e_1,e_2)$ becomes a path connecting $d_1$ and $d_2$. PROOFXXX. Now, if we contract the two digons, we obtain a map of $\cQ(n,g-1)$ with a marked path of length $4$.
%
%We now treat the case where the two cycles share an edge. Let $c$ be the first cycle, and $e$ be the third edge. If we cut along $c$ and contract the two digons, by the same reasoning, we obtain a map of $\cQ(n,g-1)$ with a marked path of length $3$.
%
%Finally, it remains the case where the two cycles intersect at both vertices, but do not share an edge. There are several cases: if we only consider the relative positions of these four edges around these two vertices, we must obtain a non planar map (otherwise the two cycles would be homotopic). Figure XXX presents the different possibilities.

\paragraph{Case 2b: $c_2$ is not a cycle anymore, and doesn't stay connected.}

In this case, $c_1$ and $c_2$ have to share two vertices.
Cut $c_1$ to obtain two marked edges $a_1$ and $b_1$. Call $a_2$ and $b_2$ the two edges of $c_2$. Say that, wlog, $a_1$ and $a_2$ (resp. $b_1$ and $b_2$) are incident. What happens (see Figure~\ref{fig_cycles_disconnect} for an example) then is we obtain a map of $\cQ(n,g-1)$ with two \textbf{disjoint} marked cycles of length $2$ ($(a_1,a_2)$ and $(b_1,b_2)$), which by Lemma~\ref{lem_counting_marked_two cycles} contributes to less than
\begin{equation}\label{eq4}
2n(n+5)Q(n+4,g-1)
\end{equation}
in~\eqref{eq_ineq_two_cycles}. %Notice that the two cycles could be of any type (see Figure~\ref{fig_cycles_disconnect}), except both contractible (otherwise $c_2$ would have been contractible).

\begin{figure}[!h]
\center
\includegraphics[scale=0.7]{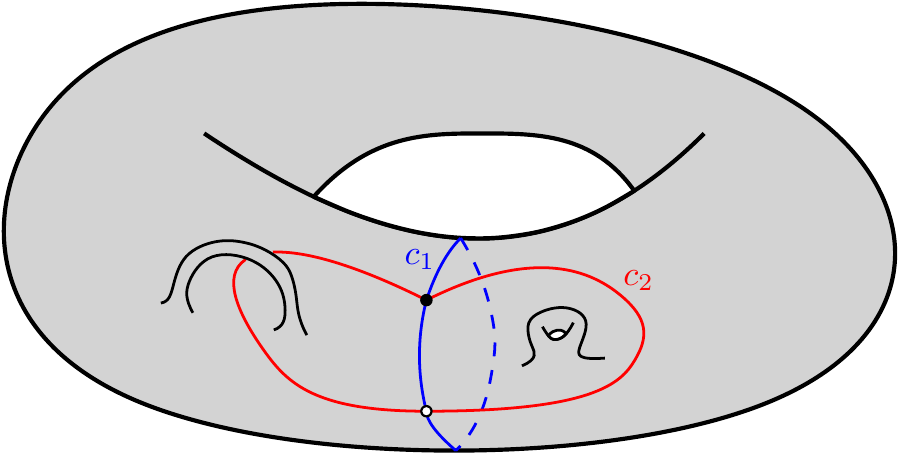}
\caption{An example of the case where $c_2$ gets disconnected when $c_1$ is cut.}\label{fig_cycles_disconnect}
\end{figure}

\paragraph{Case 3: $c_1$ and $c_2$ share an edge.}

Let us say that $c_1=(e_1,e)$ and $c_2=(e_2,e)$. There are two cases.

\textbf{a)} $(e_1,e_2)$ is a separating cycle. We cut it and obtain two maps $m_1$ with a marked edge and $m_2$ with a marked edge on a non-separating cycle of length $2$ (see Figure~\ref{fig_shared_sep}). Again, one of the two maps doesn't have a root, hence we need to turn one of the marked edges into a root edge.

If it is $m_2$ that still has a marked edge, then there are exactly
\[\sum_{n_1+n_2=n\atop n_1,n_2\geq 1} \sum_{g_1+g_2=g\atop g_1\geq 0,g_2\geq 1} Q(n_1,g_1)2Q_{ns}^{(1)}(n_2,g_2)=\sum_{n_1+n_2=n\atop n_1,n_2\geq 1} \sum_{g_1+g_2=g\atop g_1\geq 0,g_2\geq 1} Q(n_1,g_1)2n_2(2n_2-1)Q(n_2,g_2-1) \]
cases, where the equality comes from~\eqref{eq_bij_nonsep}.
We can upper bound $n_2$ by $n$ and use~\eqref{eq_CC} to bound the expression above by
\begin{equation}\label{eq5}
2n(n+3)Q(n+2,g-1).
\end{equation}

Otherwise, the root of $m_2$ lies on a non-separating cycle of length $2$ and $m_1$ has a marked edge. By a similar reasoning as in~\eqref{eq_bij_nonsep}, the number of cases is

\[\sum_{n_1+n_2=n\atop n_1,n_2\geq 1} \sum_{g_1+g_2=g\atop g_1\geq 0,g_2\geq 1} 2n_1Q(n_1,g_1)2(2n_2-1)Q(n_2,g_2-1) \]
which is less than
\begin{equation}\label{eq6}
(n+3)Q(n+2,g-1)
\end{equation}
by~\eqref{eq_CC}.

\begin{figure}[!h]
\center
\includegraphics[scale=0.7]{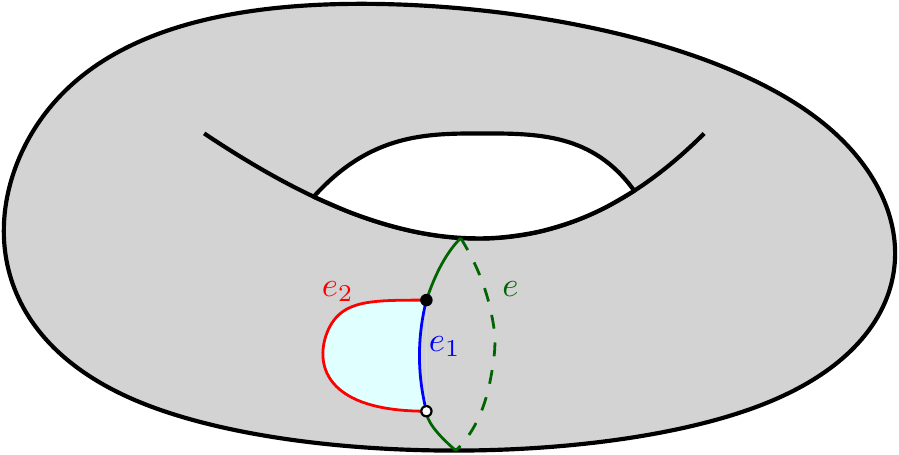}
\caption{When $c_1$ and $c_2$ share an edge and $(e_1,e_2)$ is a separating cycle. Note that the cyan part doesn't have to be planar.}\label{fig_shared_sep}
\end{figure}

\textbf{b)} Finally, the only remaining case is when $(e_1,e_2)$ is non-separating. Then we cut it, and by a similar reasoning as in Case 2a, we obtain a map of genus $g-1$ with a marked path of length $3$, which by Lemma~\ref{lem_counting_marked_path} contributes to less than
\begin{equation}\label{eq7}
2(n+2)Q(n+2,g-1)
\end{equation}
in \eqref{eq_ineq_two_cycles}.

\begin{figure}[!h]
\center
\includegraphics[scale=0.7]{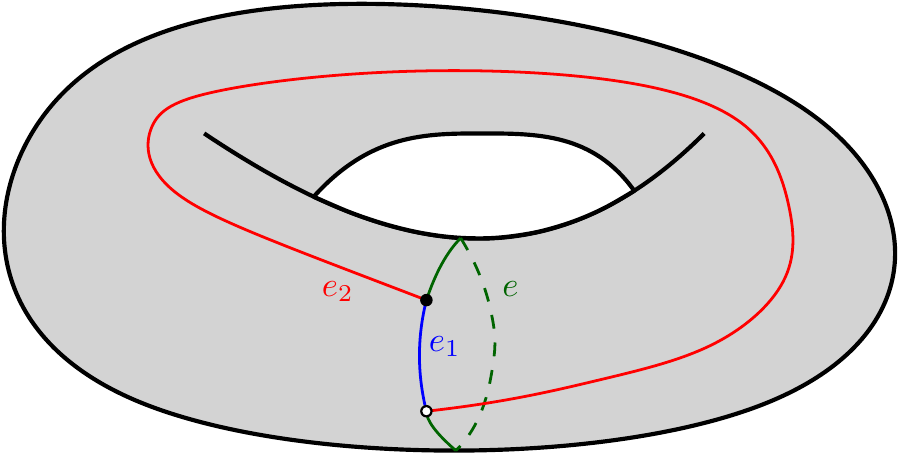}
\caption{When $c_1$ and $c_2$ share an edge and $(e_1,e_2)$ is non-separating.}
\end{figure}

%In the resulting map, each of $e_1$ and $e_2$ must be incident to either $d_1$ or $d_2$.  We claim that, on the other hand, each of $d_1$ and $d_2$ must be incident to either $e_1$ or $e_2$. Indeed, otherwise, say 

%\item If the two marked cycles are homotopic, we obtain two surfaces, one of genus $g-1$, one of genus $0$. One of them contains the root and two distinguishable marked edges, the other has two distinguishable marked edges, one of which we turn into the root. This time the marked edges are distinguishable because we need to know  which cycle they come from, and all four edges must be distinct.
%\end{itemize}

%Now, if the two cycles share one edge, we only have an injection. Open the first cycle into two distinct digons. These two digons must be linked by an edge (which . Hence, if we close the two digons, we obtain a (non-oriented) path of size $3$. We can open this path of length $3$ into boundary of length $6$ (see Figure~\ref{fig_hexagon}). Hence, if we also open the root of the resulting map into a digon, then we end up with a map of $\cQ^{(1,3)}(n,g-1)$, in an injective fashion, and there are less than $2(n+2)Q(n+2,g-1)$ such maps by Lemma~\ref{lem_ineq_boundaries}.

\end{proof}

We are now ready to prove Theorem~\ref{thm_short_cycle}.

\begin{proof}[Proof of Theorem \ref{thm_short_cycle}]
Let $X_{n,g}$ be the number of non-separating cycles of length $2$ in a uniform map  $m\in\cQ(n,g)$, then $X_{n,g}^2$ is the number of ordered pairs of distinct non-separating cycles of length $2$, plus $X_{n,g}$.
Hence
\[\E(X_{n,g})=\frac{Q_{ns}^{(1)}(n,g)}{Q(n,g)}\]
and
\[\E(X_{n,g}^2)=\E(X_{n,g})+\frac{Q_{ns}^{(2)}(n,g)}{Q(n,g)}.\]
Now, recall that $g_n$ is a sequence such that $\frac{g_n}{n}\to \theta \in (0,1/2)$. We want to prove 
\[\P(X_{n,g_n}>0)\geq k_\theta +o(1)\]
whp.
We will use the second moment method, namely the fact that 
\begin{equation}\label{eq_second_moment}
P(X_{n,g_n}>0)\geq \frac{\E(X_{n,g_n})^2}{\E(X_{n,g_n}^2)}.
\end{equation}

By \eqref{eq_bij_nonsep} and Proposition~\ref{prop_injection}, we have
\[Q_{ns}^{(1)}(n,g_n)=(1+o(1))2n^2Q(n,g_n-1)\geq \left(\frac{\theta}{2}+o(1)\right)Q(n,g_n).\]
Therefore
\begin{equation}\label{eq_low_bound_first_moment}
\E(X_{n,g_n})\geq \frac{\theta}{2}+o(1).
\end{equation}

Now we will upper bound $Q_{ns}^{(2)}(n,g_n)$.
First, note by Lemmas~\ref{lem_BRL} and~\ref{lem_ineq_genus} that 
\[2(n+3)Q(n+3,g_n-1)=o(Q(n,g_n)),\]
hence
\[Q_{ns}^{(2)}(n,g_n)\leq (1+o(1))\left(4n^4Q(n,g_n-2)+4n^2Q(n+2,g_n-1)+2n^2Q(n+4,g_n-1)\right).\]
Applying Lemmas~\ref{lem_BRL} and~\ref{lem_ineq_genus} to the inequality above, we obtain 
\[Q_{ns}^{(2)}(n,g_n)\leq(1+o(1))\left( \frac{4}{\Ce^4}+\frac{4}{\Ce^4}+\frac{2}{\Ce^6}\right)Q(n,g_n),\]
hence
\begin{equation}\label{eq_up_bound_second_moment}
\E(X_{n,g_n}^2)\leq \E(X_{n,g_n})+\frac{8}{\Ce^4}+\frac{2}{\Ce^6}+o(1).
\end{equation}

Now, by \eqref{eq_second_moment}, Theorem~\ref{thm_short_cycle} holds for 
\[k_\theta=\left(\frac{2}{\theta}+\frac{32}{\Ce^4 \theta^2}+\frac{8}{\Ce^6 \theta^2} \right)^{-1}.\]
\end{proof}

\bibliographystyle{abbrv}
\bibliography{bibli}

\end{document}